\newtheorem{theorem}{Theorem}[section]
\newtheorem{lemma}[theorem]{Lemma}
\newtheorem{proposition}[theorem]{Proposition}
\newtheorem{corollary}[theorem]{Corollary}
\theoremstyle{definition}
\newtheorem{definition}[theorem]{Definition}
\theoremstyle{remark}
\numberwithin{equation}{section}
\def\R{\mathbb R}
\def\C{\mathbb C}
\def\N{\mathbb N}
\def\S{\mathscr S}
\def\supp{\text{supp}}
\def\({\left(}
\def\){\right)}
\def\[{\left[}
\def\]{\right]}
\def\<{\left<}
\def\>{\right>}
\def\less{\lesssim}
\def\more{\gtrsim}
\begin{document}

\title{Multilinear Local Tb for Square Functions}

\author{Ana Grau de la Herr\'an}
\address{Department of Mathematics and Statistics\\ University of Helsinki\\ Helsinki  FI 00014}
\email{ana.grau@helsinki.fi}
\author{Jarod Hart}
\address{Department of Mathematics\\ University of Kansas\\ Lawrence  KS 66045}
\email{ jhart@math.ku.edu}
\author{Lucas Oliveira}
\address{Departamento de Matem\'{a}tica\\UFRGS\\ Porto Alegre RS 91509-900}
\email{lucas.oliveira@ufrgs.br}

\thanks{Grau de la Herr\'an was supported in part by NSF \#DMS 1101244 and ERC Starting Grant ``Analytic-probabilistic  
methods for borderline singular integrals".}
\thanks{Hart was supported in part by NSF Grant \#DMS1069015.}
\thanks{Oliveira was supported in part by CAPES-Processo 2314118}
%    Information for second author
%\author{Author Two}
%\address{Mathematical Research Section, School of Mathematical Sciences,
%Australian National University, Canberra ACT 2601, Australia}
%\email{two@maths.univ.edu.au}
%\thanks{Support information for the second author.}

%    General info
\subjclass[2011]{Primary 42B02; Secondary 44A02}

\date{August 27, 2012.}

\dedicatory{ }

\keywords{Bilinear, T1 Theorem, Tb Theorem, Calder\'on-Zygmund Operators, Square Function}

\begin{abstract}
In the present work we extend a local Tb theorem for square functions of Christ \cite{C} and Hofmann \cite{Ho} to the multilinear setting. We also present new $BMO$ type interpolation result for square functions associated to multilinear operators.  These square function bounds are applied to prove a multilinear local Tb theorem for singular integral operators.
\end{abstract}

\maketitle

\section{Introduction}

Consider the family of multilinear of operators $\{\Theta_{t}\}_{t>0}$ given by
\begin{align}
    \Theta_t(f_1,...,f_m)(x)=\int_{\R^{mn}}\theta_t(x,y_1,...,y_m)\prod_{i=1}^mf_i(y_i)dy_i\label{theta}
\end{align}
where $\theta_t:\R^{(m+1)n}\rightarrow\C$ and the square functions associated to $\{\Theta_t\}_{t>0}$
\begin{align}
S(f_1,...,f_m)(x)=\(\int_0^\infty|\Theta_t(f_1,...,f_m)(x)|^2\frac{dt}{t}\)^\frac{1}{2}\label{sqfunction}
\end{align}
where   $f_i$ for $i=1,...,m$ are initially functions in $ C_{0}^{\infty}(\R^{n})$ (smooth with compact support).  The purpose of this work is to find appropriate cancellation conditions on $\theta_t$ and indices $p,p_1,...,p_m$ that guarantee $L^p$ boundedness of the square functions $S$ of the form
\begin{align}
||S(f_1,...,f_m)||_{L^p}\less\prod_{i=1}^m||f||_{L^{p_i}}\label{Lpbound}
\end{align}
given that $\theta_t$ satisfies some size and regularity estimates.  In particular, we assume that $\theta_t$ satisfies for all $x,y_1,...,y_m,x', y_1',...,y_m'\in\R^n$
\begin{align}
&|\theta_t(x,y_1,...,y_m)|\less\frac{t^{-mn}}{\prod_{i=1}^m(1+t^{-1}|x-y_i|)^{N+\gamma}}\label{size}\\
&|\theta_t(x,y_1,...,y_m)-\theta_t(x,y_1,...,y_i',...,y_m)|\less\frac{t^{-mn}(t^{-1}|y_i-y_i'|)^\gamma}{\prod_{i=1}^m(1+t^{-1}|x-y_i|)^{N+\gamma}}\label{regy}\\
&|\theta_t(x,y_1,...,y_m)-\theta_t(x',y_1,...,y_m)|\less\frac{t^{-mn}(t^{-1}|x-x'|)^\gamma}{\prod_{i=1}^m(1+t^{-1}|x-y_i|)^{N+\gamma}}\label{regx}
\end{align}
for some $N>n$ and $0<\gamma\leq1$.   It follows from a scaling argument that if \eqref{Lpbound} holds, then the indices $p,p_1,...,p_m$ must satisfy the H\"older type relationship
\begin{align}
\frac{1}{p}=\sum_{i=1}^m\frac{1}{p_i}.\label{Holder}
\end{align}
So throughout this work we assume that all indices $p,p_1,...,p_m$ satisfy \eqref{Holder}.

There is a rich history of the study of square functions in harmonic analysis.  In \cite{Se}, Semmes studied the linear version ($m=1$) of the operators \eqref{theta}.  He proved that if $\theta_t$ satisfies \eqref{size}, \eqref{regy}, and there exists a para-accretive function $b$ such that $\Theta_t(b)=0$ for all $t>0$, then the bound \eqref{Lpbound} it's satisfied with $p=p_1=2$.  (For the definition of para-accretive see e.g. \cite{DJS}, \cite{Chr}, \cite{Se} or \cite{Hart1}.)   In fact the perspective of Semmes was a Besov type square function given in the multilinear setting by
\begin{align}
(f_1,...,f_m)\mapsto\(\int_0^\infty||\Theta_t(f_1,...,f_m)||_{L^p}^2\frac{dt}{t}\)^\frac{1}{2}.\label{Besov}
\end{align}
When $m=1$ and $p=p_1=2$ as in \eqref{Se}, the study of this Besov type square function \eqref{Besov} coincides with the study of \eqref{sqfunction}.  The Besov type square function point of view was carried to the multilinear setting by Maldonado in \cite{M} and Maldonado-Naibo in \cite{MN}, where the authors prove bounds of \eqref{Besov} on products of Besov and Lebesgue spaces under kernel conditions equivalent to \eqref{size} and \eqref{regy}, and $\Theta_t(1,f_2,...,f_m)=0$ for $t>0$.

In \cite{GO}, Grafakos-Oliveira proved the bound \eqref{Lpbound} for $p=2$ and $1\leq p_i\leq\infty$ for $i=1,...,m$ assuming \eqref{size}, \eqref{regy} and that there exist para-accretive functions $b_i$ for $i=1,...,m$ on $\R^n$ such that the cancelation condition
\begin{equation}
\Theta_t(b_1,...,b_m)=0\label{Theta(b)=0}
\end{equation}
holds.  In \cite{Hart1}, under similar size, regularity and cancellation conditions, Hart showed (in the discrete bilinear setting, but is easily extended to the $m$-linear setting) that \eqref{Lpbound} holds for $1<p,p_i<\infty$ for $i=1,...,m$, and under stronger size and regularity conditions for $1<p_i<\infty$ and $\frac{1}{2}<p<\infty$.  In \cite{Hart1} and \cite{GLMY}, Hart and Grafakos-Liu-Maldonado-Yang prove bounds of the square functions \eqref{sqfunction} and \eqref{Besov} on products of various spaces of smooth functions assuming \eqref{size}, \eqref{regy} and a variety of cancellation conditions.

In \cite{Chr}, Christ introduced the notion of a \emph{local Tb theorem} in the context of singular integrals, and applied this to estimates for the Cauchy integral on Lipschitz curves. He changed the existence of a (globally defined) para-accretive function where the operator vanishes, for the existence of a family of (locally defined) functions where you have some additional information about behavior of the operator. More recently, in \cite{Ho} Hofmann gave an analogous result for square functions based on some previous work by Auscher-McIntosh-Hofmann-Lacey-Tchamitchian on the Kato square root problem in \cite{AHLMcIT} (see also related work \cite{HMc} by Hofmann-McIntosh and \cite{HLMc} By Hofmann-Lacey-McIntosh) .

The principal result in this article is a extension of Hofmann's result to multilinear square functions, which we state now.
\begin{theorem}\label{t:main}
Let $\Theta_t$ and $S$ be defined as in \eqref{theta} and \eqref{sqfunction} where $\theta_t$ satisfies \eqref{size}-\eqref{regx}.  Suppose there exist $q_i,q>1$ for $i=1,...,m$ with $\frac{1}{q}=\sum_{i=1}^m\frac{1}{q_i}$ and functions $b_Q^i$ indexed by dyadic cubes $Q\subset\R^n$ for $i=1,...,m$ such that for every dyadic cube $Q$
\begin{align}
&\int|b_Q^i|^{q_i}\leq B_1|Q|\label{bsize}\\
&\frac{1}{B_2}\leq\left|\frac{1}{|Q|}\int_Q\prod_{i=1}^mb_Q^i(x)dx\right|\label{baccretive}\\
&\left|\frac{1}{|R|}\int_R\prod_{i=1}^mb_Q^i(x)dx\right|\leq B_3\prod_{i=1}^m\left|\frac{1}{|R|}\int_Rb_Q^i(x)dx\right|\label{bcompatible}\\
&\hspace{4cm}\text{ for all dyadic subcubes }R\subset Q\notag\\
&\int_Q\(\int_0^{\ell(Q)}|\Theta_t(b_Q^1,...,b_Q^i)(x)|^2\frac{dt}{t}\)^\frac{q}{2}dx\leq B_3|Q|.\label{thetacancel}
\end{align}
Then for all $1<p_i<\infty$ satisfying \eqref{Holder}
\begin{align}
||S(f_1,...,f_m)||_{L^2}\less\prod_{i=1}^m||f_i||_{L^{p_i}}\label{sqbound}
\end{align}
\end{theorem}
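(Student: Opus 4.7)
The plan is to follow the strategy of Hofmann's local $Tb$ theorem for square functions \cite{Ho}, adapted to the multilinear setting, and then to upgrade the resulting testing-scale bound to the full Lebesgue range via the multilinear $BMO$ interpolation result mentioned in the abstract.

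First, I would reduce matters to a Carleson-measure estimate on the symbol $\Theta_t(1,\ldots,1)$. By a multilinear $T1$-type theorem for square functions of the kind developed in \cite{GO} and \cite{Hart1}, together with the kernel estimates \eqref{size}-\eqref{regx}, it is enough to prove that for every dyadic cube $Q_0\subset\R^n$,
\begin{equation*}
\int_{Q_0}\int_0^{\ell(Q_0)}|\Theta_t(1,\ldots,1)(x)|^2\,\frac{dt}{t}\,dx\lesssim|Q_0|.
\end{equation*}
The task is thus to promote the local testing hypotheses \eqref{bsize}-\eqref{thetacancel} to this Carleson bound on the constant tuple.

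Second, I would fix $Q_0$ and perform a stopping-time/corona decomposition inside $Q_0$ adapted to the testing tuple $(b_{Q_0}^1,\ldots,b_{Q_0}^m)$. Let $\{Q_j\}$ be the maximal dyadic subcubes of $Q_0$ on which for a suitable choice of thresholds at least one of the following fails: $|Q_j|^{-1}\int_{Q_j}|b_{Q_0}^i|^{q_i}$ remains bounded for every $i$, and $|Q_j|^{-1}\bigl|\int_{Q_j}\prod_i b_{Q_0}^i\bigr|$ remains bounded below. Hypotheses \eqref{bsize} and \eqref{baccretive}, together with Chebyshev's inequality, force a packing estimate $\sum_j|Q_j|\le\eta|Q_0|$ with $\eta<1$. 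On the Whitney region over $Q_0\setminus\bigcup_j Q_j$ the compatibility hypothesis \eqref{bcompatible} allows us to replace each factor $1$ by a bounded multiple of $b_{Q_0}^i$, so that $\Theta_t(1,\ldots,1)$ is essentially $\Theta_t(b_{Q_0}^1,\ldots,b_{Q_0}^m)$ up to error terms that are absorbed using the tail and regularity estimates \eqref{regy}-\eqref{regx}. The replaced integral is then controlled directly by \eqref{thetacancel} and H\"older's inequality with exponents $q,q_1,\ldots,q_m$. On each stopping cube $Q_j$ we restart with the tuple $(b_{Q_j}^1,\ldots,b_{Q_j}^m)$, and the geometric packing $\sum_j|Q_j|\le\eta|Q_0|$ closes the recursion to yield the desired Carleson bound on $Q_0$. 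The resulting testing-scale estimate is then promoted to the full range $1<p_i<\infty$ by the multilinear $BMO$-$L^p$ interpolation theorem for square functions developed in this paper.

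The main obstacle lies in the stopping-time step. In the linear Hofmann argument a single scalar average $|Q|^{-1}\int_Q b$ simultaneously controls the stopping criterion and the replacement identity $1\leftrightarrow b/(\text{average})$. Here one must juggle $m$ different averages, and the only way the accretivity hypothesis \eqref{baccretive} remains usable after stopping is through the compatibility hypothesis \eqref{bcompatible}, which weakens the unavailable identity $\int_R\prod_i b^i=\prod_i\int_R b^i$ to a one-sided comparison valid off the stopping cubes. Arranging the thresholds so that the stopping packing $\eta<1$ survives the use of \eqref{bcompatible}, and so that the resulting error terms can be summed against the Carleson packing on $Q_0$, is the technical heart of the argument.
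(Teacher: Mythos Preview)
Your overall architecture is correct and matches the paper: reduce to the Carleson estimate on $|\Theta_t(1,\ldots,1)(x)|^2\,\frac{dx\,dt}{t}$, run a stopping-time argument on a fixed dyadic cube using the pseudo-accretive system, and then upgrade via the $BMO$ interpolation of Section~4. But two of your intermediate steps are either vague or take a different route than the paper, and in one place this matters.

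The passage ``\eqref{bcompatible} allows us to replace each factor $1$ by a bounded multiple of $b_{Q_0}^i$, so that $\Theta_t(1,\ldots,1)$ is essentially $\Theta_t(b_{Q_0}^1,\ldots,b_{Q_0}^m)$ up to error terms absorbed using \eqref{regy}--\eqref{regx}'' hides the actual mechanism. There is no direct substitution $1\mapsto b^i$ in a general multilinear operator. What the paper does is: on the good (sawtooth) region the stopping criterion together with \eqref{bcompatible} forces $\prod_{i=1}^m|A_tb_Q^i(x)|\ge(2B_2B_3)^{-1}$, so one may write
\[
|\Theta_t(1,\ldots,1)(x)|\lesssim\bigl|\Theta_t(1,\ldots,1)(x)\,\mathbb A_t(b_Q^1,\ldots,b_Q^m)(x)\bigr|
=\bigl|M_{\Theta_t(1,\ldots,1)}\mathbb A_t(b_Q^1,\ldots,b_Q^m)(x)\bigr|,
\]
and then decompose $M_{\Theta_t(1,\ldots,1)}\mathbb A_t=M_{\Theta_t(1,\ldots,1)}(\mathbb A_t-\mathbb P_t)+(M_{\Theta_t(1,\ldots,1)}\mathbb P_t-\Theta_t)+\Theta_t$. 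The first error is controlled by a multilinear $\mathbb A_t-\mathbb P_t$ square-function estimate (Proposition~\ref{p:error}), the second by the multilinear square-function $T1$ theorem (Proposition~\ref{SF}), and the last term is exactly \eqref{thetacancel}. Kernel regularity alone does not absorb these errors; the two operator-level square-function bounds are the essential ingredients. Relatedly, the paper's stopping criterion is only on the (real part of the normalized) \emph{product} average; no size stopping on $\int_R|b_Q^i|^{q_i}$ is needed.

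Your proposed closure by direct corona recursion differs from the paper and has a genuine exponent issue. The paper does \emph{not} iterate the sawtooth estimate directly; the argument above yields only
\[
\int_Q\Bigl(\int_{\tau_Q(x)}^{\ell(Q)}|\Theta_t(1,\ldots,1)(x)|^2\,\frac{dt}{t}\Bigr)^{q/2}dx\lesssim|Q|,
\]
which is an $L^q$ bound, whereas the Carleson norm you want to iterate is the $L^2$ quantity $\int_Q\int_{\tau_Q(x)}^{\ell(Q)}|\Theta_t(1,\ldots,1)|^2\,\frac{dt}{t}\,dx$. For $q\ge2$ H\"older bridges the gap and your corona would close, but the hypotheses allow $1<q<2$. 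The paper handles this by a John--Nirenberg-type self-improvement: from the $L^q$ sawtooth bound one extracts a weak-type estimate $|\{x\in Q:g_Q(x)>N\}|\le(1-\beta)|Q|$ (Lemma~\ref{l:weakestimate}), then runs a Whitney decomposition of the super-level set of the truncated square function and iterates on that. This is where \eqref{regx} is actually used, to make $g_{Q,\epsilon}$ continuous so that the super-level set is open. If you want to keep a pure corona iteration you must either assume $q\ge2$ or insert a comparable self-improvement step.
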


If $\{b_Q\}$ satisfies \eqref{bsize} and \eqref{baccretive}, we say that $\{b_Q\}$ is a pseudo-accretive system.   This definition of pseudo-accretive system is analogous to the one defined by Christ in \cite{Chr} in the linear case when restricted to the Euclidean setting.  More precisely, Christ defined a pseudo-accretive system to be a collection of functions $\{b_B\}$ indexed by all balls $B=B(x,r)\subset \R^n$ satisfying \eqref{bsize} and \eqref{baccretive} with $m=1$, $q=q_1=\infty$ and dyadic cubes $Q$ replaced with balls $B$.  We say that $\{b_Q^i\}$ for $i=1,...,m$ is an $m$-compatible, or just compatible, collection of pseudo-accretive systems if they satisfy \eqref{bsize}-\eqref{bcompatible}.  The proof of Theorem \ref{t:main} follows along the lines of the linear version in \cite{Ho}, with modifications to address difficulties that arise in the setting of multilinear operators.

We also prove that if the square function $S$ defined in \eqref{sqfunction} where the kernels of $\Theta_t$ satisfy \eqref{size},\eqref{regy} and \eqref{sqbound} for some indices $p,p_1,...,p_m$, then $S$ is also bounded $$L^{\infty}_c(\R^{n}) \times\cdots\times L^{\infty}_c(\R^{n})\to BMO$$ where $L^\infty_c$ is the set of $L^\infty$ functions with compact support. Note that $L^\infty_c$ is not a Banach space and $S$ is not a linear operator, so this bound does not mean that $S$ is continuous from $L^\infty\times\cdots\times L^\infty$ into $BMO$. This is simply an estimate for $f_1,...,f_m\in L^\infty_c$
$$||S(f_1,...,f_m)||_{BMO}\less\prod_{i=1}^m||f_i||_{L^\infty}$$
where the constant is independent of $f$ (and in particular the support $f_i$ for $i=1,...,m$). This means that we cannot use this bound to approximate $S(f_1,...,f_m)$ for $f_1,...,f_m\in L^\infty$, but the estimate is still useful for interpolation.  This will be discussed more in depth in section 4.

This permits us to prove the following generalization of the multilinear $T(1)$ theorem of Grafakos-Torres \cite{GT2} as a sort of multilinear version of the local Tb theorem of Christ in \cite{Chr}.

\begin{theorem}\label{t(1,1)}
Let $T$ be a continuous bilinear operator from $\S\times\cdots\times\S$ into $\S'$ with standard Calder\'on-Zygmund kernel $K$.  Suppose that $T\in WBP$ and there exist $2\leq q<\infty$ and $1<q_i<\infty$ with $\frac{1}{q}=\sum_{i=1}^m\frac{1}{q_i}$ and functions $b_Q^i$ indexed by dyadic cubes $Q\subset\R^n$ for $i=1,...,m$ that satisfy \eqref{bsize}-\eqref{bcompatible} and for all dyadic cubes $Q\subset\R^n$
\begin{align}
&\int_Q\(\int_0^{\ell(Q)}|Q_tT(P_tb_Q^1,...,P_tb_Q^m)(x)|^2\frac{dt}{t}\)^\frac{q}{2}dx\less|Q|\label{Tcancel}\\
&T^{*1}(1,...,1),...,T^{*m}(1,...,1)\in BMO.\label{Tstarcancel}
\end{align}
Then $T$ is bounded from $L^{p_1}\times\cdots\times L^{p_m}$ into $L^p$ for all $1<p_i<\infty$ such that \eqref{Holder} holds.  Here $P_t$ is an approximation to the identity and $Q_t$ a Littlewood-Paley-Stein projection operator both with $C_0^\infty$ convolution kernels.
\end{theorem}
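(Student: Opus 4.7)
The plan is to mimic the structure of a multilinear $T(1)$-theorem proof while using Theorem~\ref{t:main} to handle the local cancellation condition \eqref{Tcancel} in place of the usual $T(1)\in BMO$ hypothesis. The operator $T$ will be expanded through a multilinear Calder\'on reproducing formula, and the resulting pieces will be controlled either by the square function of Theorem~\ref{t:main} or by almost orthogonality.

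Step one is paraproduct reduction. For each $i=1,\ldots,m$, I would construct a multilinear paraproduct $\Pi_i$ associated to the $BMO$ function $T^{*i}(1,\ldots,1)$, bounded from $L^{p_1}\times\cdots\times L^{p_m}$ into $L^p$ for every admissible tuple, satisfying $\Pi_i^{*i}(1,\ldots,1) = T^{*i}(1,\ldots,1)$ and $\Pi_i^{*j}(1,\ldots,1) = 0$ for $j\neq i$. Replacing $T$ by $\tilde T := T - \sum_i \Pi_i$ reduces matters to the case $\tilde T^{*i}(1,\ldots,1) = 0$ for all $i$. One checks that $\tilde T$ still has a standard Calder\'on-Zygmund kernel, satisfies $WBP$, and satisfies \eqref{Tcancel} up to paraproduct error terms whose square-function norm over $Q\times(0,\ell(Q))$ is bounded by $|Q|$.

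Step two is to set $\Theta_t(f_1,\ldots,f_m) := Q_t\tilde T(P_tf_1,\ldots,P_tf_m)$ and verify the kernel estimates \eqref{size}-\eqref{regx} for its kernel. The size and $x$-regularity follow from the $C_0^\infty$ kernel of $Q_t$ combined with the Calder\'on-Zygmund estimates on $\tilde T$ and $WBP$; the crucial $y_i$-regularity \eqref{regy} uses $\tilde T^{*i}(1,\ldots,1) = 0$ together with the $\gamma$-H\"older regularity of the kernel of $P_t$ to produce the factor $(t^{-1}|y_i-y_i'|)^\gamma$. Together with \eqref{Tcancel}, this places $\Theta_t$ inside the hypotheses of Theorem~\ref{t:main}, which yields the desired square function bound for $S = \bigl(\int_0^\infty|\Theta_t|^2\,dt/t\bigr)^{1/2}$.

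Step three is the reproducing-formula expansion. Choose a Calder\'on formula $I = c\int_0^\infty \tilde Q_t Q_t\,dt/t$, with $\tilde Q_t,Q_t$ having $C_0^\infty$ convolution kernels and a mean-zero factor on at least one side, and write
\begin{align*}
\tilde T(f_1,\ldots,f_m) = c\int_0^\infty \tilde Q_t\bigl[Q_t\tilde T(f_1,\ldots,f_m)\bigr]\frac{dt}{t}.
\end{align*}
Inside the brackets, substitute $f_i = P_tf_i + (I-P_t)f_i$ and expand multilinearly. The principal term $c\int_0^\infty \tilde Q_t[\Theta_t(f_1,\ldots,f_m)]\,dt/t$, paired against $g\in L^{p'}$, is controlled by Cauchy-Schwarz in $t$ by $\|S(f_1,\ldots,f_m)\|_{L^p}$ times the standard Littlewood-Paley estimate for $\tilde Q_t^*g$. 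The $2^m-1$ error terms each contain at least one $(I-P_t)$ factor and are treated by almost orthogonality, exploiting the Calder\'on-Zygmund regularity of $\tilde T$ and the mean-zero property of $Q_t$ to produce Cotlar-Stein-type decay. If Theorem~\ref{t:main} only delivers the $L^2$-square-function bound in a restricted regime, the $BMO$ interpolation result of section 4 extends the bound across the full admissible range of $p_i$. The main obstacle lies in step three: while each individual error term is handled by familiar almost-orthogonality ideas, their number grows combinatorially in $m$, and the multilinear kernel structure of $\tilde T$ must be exploited carefully so as not to lose the $\gamma$-gain on which the entire estimate rests.
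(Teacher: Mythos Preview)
Your overall architecture --- paraproduct reduction to the case $\tilde T^{*i}(1,\ldots,1)=0$, then a reproducing-formula expansion controlled by the square-function theorem --- matches the paper's. The decomposition in step three, however, is genuinely different and less efficient. The paper does not apply a Calder\'on formula only on the output side and then expand each input as $P_t+(I-P_t)$. Instead it writes
\[
\langle \tilde T(f_1,\ldots,f_m),f_0\rangle=-\int_0^\infty t\frac{d}{dt}\langle \tilde T(P_t^2f_1,\ldots,P_t^2f_m),P_t^2f_0\rangle\,\frac{dt}{t},
\]
which produces exactly $m+1$ terms, one for each slot $i=0,1,\ldots,m$, and each term is a square function of the form $\Theta_t^{(i)}=Q_t^{(2)*}\tilde T^{*i}(P_t^2\cdot,\ldots,P_t^2\cdot)$ with honest kernels satisfying \eqref{size}--\eqref{regx}. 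There are no $(I-P_t)$ factors and no Cotlar--Stein argument; the $i=0$ piece is handled by Theorem~\ref{t:main} via \eqref{Tcancel}, and the pieces $i=1,\ldots,m$ satisfy $\Theta_t^{(i)}(1,\ldots,1)=Q_t^{(2)*}\tilde T^{*i}(1,\ldots,1)=0$ and are bounded directly by Proposition~\ref{SF}. Your $2^m-1$ error terms, by contrast, carry $(I-P_t)$ in at least one slot, so their kernels are not locally integrable and do not fit \eqref{size}--\eqref{regy}; the ``Cotlar--Stein-type decay'' you invoke would require a further scale decomposition of $(I-P_t)$ and a careful almost-orthogonality estimate that you have not supplied.

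There is also a misattribution that signals a real gap. In step two you say that the $y_i$-regularity \eqref{regy} of $\theta_t$ uses $\tilde T^{*i}(1,\ldots,1)=0$. It does not: \eqref{regy} follows from the smoothness of the convolution kernel of $P_t$ in $y_i$ together with $WBP$ and the standard Calder\'on--Zygmund estimates for $\tilde T$, exactly as for the size bound. The vanishing conditions $\tilde T^{*i}(1,\ldots,1)=0$ are used elsewhere --- in the paper, precisely to make $\Theta_t^{(i)}(1,\ldots,1)=0$ for $i\geq1$ so that Proposition~\ref{SF} applies. In your scheme the analogous role of these conditions would be to supply the cancellation needed for your $(I-P_t)$ error terms, but you never invoke them there. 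As written, then, step three has no mechanism that actually consumes the hypotheses $\tilde T^{*i}(1,\ldots,1)=0$, and without them those error terms are not controlled. Switching to the symmetric $(m+1)$-term decomposition above both removes the combinatorial blow-up you flag and makes transparent where each cancellation hypothesis enters.
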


To state \eqref{Tcancel} more precisely, we mean the following:  For any $\varphi,\psi\in C_0^\infty$ such that $\widehat\varphi(0)=1$ and $\widehat\psi(0)=0$, \eqref{Tcancel} holds for $P_tf=\varphi_t*f$ and $Q_tf=\psi_t*f$ where the constant is independent of the dyadic cube $Q$, but may depend on $\varphi$ and $\psi$.

The article is organized in the following way: In the next section we collect some results that will be useful in the proofs of the results stated above. In section 3, we prove the Theorem \ref{t:main} for $p=2$. In section 4, we precisely state and prove the $BMO$ endpoint estimate claimed above and complete the proof of Theorem \ref{t:main} for all $2\leq p<\infty$. In section 5, we prove the Theorem \ref{t(1,1)}.

The first author would like to thank... The second author would like to thank... The third author would like to thank...

\section{Preliminary Results}

In what follows $A\lesssim B$ means $A\le CB$ for some positive constant $C$. From this point on we will always work with smooth and compact supported functions, since the general result follows from density unless otherwise stated.

Define for $t>0$ the linear and multilinear dyadic average operators
\begin{align*}
&A_tf(x)=\frac{1}{|Q(x,t)|}\int_{Q(x,t)}f(x)\,dx,\\
&\mathbb A_t(f_1,...,f_m)(x)=\prod_{i=1}^mA_tf_i(x)
\end{align*}
where $Q(x,t)$ is the smallest dyadic cube containing $x$ with side length $\ell(Q)>t$. Define the linear and multilinear smooth approximation to the identity operators
\begin{align*}
&P_tf(x)=\int \varphi_{t}(x-y)f(y)\,dy,\\
&\mathbb P_t(f_1,...,f_m)(x)=\prod_{i=1}^mP_tf_i(x)
\end{align*}
where $\varphi\in C_0^\infty(\R^n)$ has integral $1$.

\begin{definition}
A positive measure $d\mu(x,t)$ on $\R^{n+1}_{+}=\{(x,t):x\in\R^n,\;t>0\}$  is called a   Carleson measure if
\begin{align}\label{10}
    \|d\mu\|_{\mathcal{C}}=\sup_Q\frac{1}{|Q|}d\mu(T(Q))<\infty\, ,
\end{align}
where the supremum is taken over all cubes $Q\subset\R^n$, $|Q|$ denotes the Lebesgue measure of the cube $Q$,   $T(Q)=Q\times (0,\ell(Q)]$ denotes the \emph{tent} over $Q$, and  $\ell(Q)$ is the side length of $Q$.
\end{definition}

We now state a result that is a multilinear version of the $T(1)$ theorem for square functions due to \cite{Hart1}, \cite{GLMY} and \cite{GO}.

\begin{proposition}[\cite{Hart1},\cite{GLMY},\cite{GO}]\label{SF}
Suppose that the kernel $\theta_{t}(x,y_{1},\dots,y_{m})$ satisfies \eqref{size}-\eqref{regy}.  If $\Theta_t(1,...,1)=0$ for $t>0$, then the square function defined in \eqref{sqfunction} satisfies the bound \eqref{Lpbound} for all $1<p,p_i< \infty$, $i=1,...,m$.
\end{proposition}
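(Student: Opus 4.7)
The plan is to first establish the base estimate at $p=2$ with $p_1 = \cdots = p_m = 2m$, via a Littlewood-Paley almost-orthogonality argument exploiting the cancellation $\Theta_t(1,\dots,1)=0$, and then extend to the full range of exponents via vector-valued multilinear Calder\'on-Zygmund theory.

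For the $L^{2m}\times\cdots\times L^{2m}\to L^2$ bound, I would write each input via a Calder\'on reproducing formula $f_i = \int_0^\infty Q_{s_i}^2 f_i \, ds_i/s_i$, where $Q_s$ is a smooth compactly supported Littlewood-Paley projection with vanishing mean, so that
\begin{equation*}
\Theta_t(f_1,\dots,f_m) = \int_{(0,\infty)^m}\Theta_t(Q_{s_1}g_1,\dots,Q_{s_m}g_m)\,\frac{ds_1}{s_1}\cdots\frac{ds_m}{s_m},
\end{equation*}
with $g_i = Q_{s_i}f_i$. The task reduces to an almost-orthogonality estimate of the form
\begin{equation*}
|\Theta_t(Q_{s_1}g_1,\dots,Q_{s_m}g_m)(x)| \lesssim \prod_{i=1}^m\min\left(\frac{s_i}{t},\frac{t}{s_i}\right)^{\gamma'}\mathcal{M} g_i(x)
\end{equation*}
for some $\gamma' > 0$ and an appropriate Hardy-Littlewood type maximal operator $\mathcal{M}$. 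In the regime $s_i \le t$, the gain comes from the vanishing moment of $Q_{s_i}$ combined with the $y_i$-regularity \eqref{regy} of $\theta_t$; in the regime $s_i \ge t$, one uses $\Theta_t(1,\dots,1)=0$ to subtract the value $Q_{s_i}g_i(x)$ inside the $y_i$-integral before again invoking \eqref{regy}. Integrating $t$ against $dt/t$, running a Schur test in each $s_i$ (the kernel $\min(s/t,t/s)^{\gamma'}$ being $dt/t$-integrable on $(0,\infty)$), and combining with H\"older's inequality and the Littlewood-Paley square function characterization of $L^{2m}$ yields the $L^{2m}\times\cdots\times L^{2m}\to L^2$ bound.

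For general $1<p_i<\infty$ satisfying \eqref{Holder}, I would view the map $(f_1,\dots,f_m)\mapsto\{\Theta_t(f_1,\dots,f_m)\}_{t>0}$ as a multilinear operator with values in the Hilbert space $L^2(\R_+,dt/t)$. The hypotheses \eqref{size} and \eqref{regy} translate directly into multilinear $L^2(dt/t)$-valued Calder\'on-Zygmund kernel estimates for the vector-valued kernel $t\mapsto\theta_t(x,y_1,\dots,y_m)$, and the $L^{2m}\times\cdots\times L^{2m}\to L^2$ bound above serves as the base estimate. The vector-valued form of the Grafakos-Torres multilinear $T(1)$ theorem then produces the full range of $L^{p_1}\times\cdots\times L^{p_m}\to L^p$ bounds.

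The main obstacle I expect is the multilinear almost-orthogonality estimate: each of the $m$ scale parameters $s_i$ interacts with $t$, and the gains from each must combine so that a Schur-type argument succeeds on the full $(m+1)$-parameter integral. A subtlety worth flagging is that only $y_i$-regularity is assumed (no $x$-regularity), so all smoothness must be extracted on the input side --- this is compatible with inserting the Calder\'on reproducing formula as above, but it rules out any approach that tries to dualize the cancellation to the output variable.
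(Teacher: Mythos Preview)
There is a genuine gap in the almost-orthogonality step. The product estimate you claim does not follow from $\Theta_t(1,\dots,1)=0$ in the regime where several $s_i\ge t$. That hypothesis is a \emph{joint} cancellation condition: it lets you subtract the single constant $\prod_i Q_{s_i}g_i(x)$, and telescoping $\prod_i h_i(y_i)-\prod_i h_i(x)$ then yields a sum of terms each carrying only \emph{one} factor $(t/s_j)^{\gamma'}$, not a product. Your phrase ``subtract $Q_{s_i}g_i(x)$ inside the $y_i$-integral'' would require $\Theta_t(h_1,\dots,h_{i-1},1,h_{i+1},\dots,h_m)=0$ for the other inputs $h_j$, which is not what is assumed. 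A concrete obstruction: with $m=2$ and $\theta_t(x,y_1,y_2)=\psi_t(x-y_1)\varphi_t(x-y_2)$ where $\int\psi=0$ and $\int\varphi=1$, one has $\Theta_t(1,1)=0$ and \eqref{size}--\eqref{regy} hold, yet $\Theta_t(Q_{s_1}g_1,Q_{s_2}g_2)=(\psi_t*Q_{s_1}g_1)(\varphi_t*Q_{s_2}g_2)$ carries no decay in $t/s_2$ when $s_2\ge t$. With only additive decay on the region where all $s_i\ge t$, the multi-parameter Schur integral diverges.

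The remedy is not to insert the reproducing formula in all slots at once. Telescope instead, using $\Theta_t(1,\dots,1)=0$:
\begin{align*}
\Theta_t(f_1,\dots,f_m)=\sum_{j=1}^m\Bigl(\prod_{i<j}P_tf_i\Bigr)\Bigl[\Theta_t(\underbrace{1,\dots,1}_{j-1},f_j,\dots,f_m)-P_tf_j\,\Theta_t(\underbrace{1,\dots,1}_{j},f_{j+1},\dots,f_m)\Bigr].
\end{align*}
Each bracket vanishes when $f_j\equiv 1$ \emph{regardless of} $f_{j+1},\dots,f_m$, so a one-parameter almost-orthogonality argument (Calder\'on formula in slot $j$ only) produces the required $\min(s/t,t/s)^{\gamma'}$ decay, while the remaining inputs are controlled pointwise by $Mf_i$. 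This bounds $S(f_1,\dots,f_m)$ pointwise by a finite sum of products of one Littlewood--Paley square function with $m-1$ maximal functions, and H\"older then gives \eqref{Lpbound} directly for every $1<p,p_i<\infty$. Your second step becomes unnecessary --- which is fortunate, since the $L^2(dt/t)$-valued Calder\'on--Zygmund kernel bound you invoke actually needs roughly $N>mn$ in \eqref{size}, not merely $N>n$; compare the remark immediately following the proposition. (The paper itself does not prove this proposition; it is quoted from the cited references.)
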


\remark Under extra size conditions on the kernel $\theta_{t}(x,y_{1},...,y_{m})$, i.e. if we require $N>2n$ in \eqref{size} and \eqref{regx}, we can apply the vector-valued Calder\'on-Zygmund theory developed in \cite{Hart1} to extend the theorem above to the complete quasi-Banach case, that is, with $1/2<p\leq1$.

The following result relates Carleson measures and a special kind of multilinear operator that will be useful for us.  An important tool in the proof of the above theorems is the following multilinear version of a theorem of Christ and Journ\'e \cite{CJ}.
\begin{proposition}\label{p:Carlesonbound}
Assume $\Theta_t$ and $S$ are defined as in \eqref{theta} and \eqref{sqfunction} where $\theta_t$ satisfies \eqref{size}-\eqref{regx}.  If $\Theta_t$ satisfies the Carleson measure estimate
\begin{align}
\int_Q\int_0^{\ell(Q)}|\Theta_t(1,...,1)(x)|^2\frac{dt\,dx}{t}\less|Q|\label{Carlesonbound}
\end{align}
for all cubes $Q\subset\R^n$, then \eqref{Lpbound} holds when $p=2$ and $1<p_i<\infty$ for $i=1,...,m$.
\end{proposition}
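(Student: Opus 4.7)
The plan is to reduce Proposition~\ref{p:Carlesonbound} to Proposition~\ref{SF} by peeling off the non-vanishing piece $\Theta_t(1,\ldots,1)$. I would write
\begin{align*}
\Theta_t(f_1,\ldots,f_m)(x) = R_t(f_1,\ldots,f_m)(x) + \Theta_t(1,\ldots,1)(x)\,\mathbb P_t(f_1,\ldots,f_m)(x),
\end{align*}
where $\mathbb P_t$ is the multilinear smooth approximation to the identity introduced in Section~2 and $R_t$ is defined by the equation above. Minkowski's inequality in $L^2(dt/t)$ then splits the square function into a remainder piece $S_R$ and a Carleson piece $S_C$, and it suffices to bound each separately on $L^{p_1}\times\cdots\times L^{p_m}\to L^2$ when $\sum_i 1/p_i=1/2$.

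For $S_R$, the kernel of $R_t$ is
\begin{align*}
r_t(x,y_1,\ldots,y_m) = \theta_t(x,y_1,\ldots,y_m) - \Theta_t(1,\ldots,1)(x)\prod_{i=1}^m\varphi_t(x-y_i).
\end{align*}
Integrating the size bound \eqref{size} and using $N+\gamma>n$ gives the uniform estimate $|\Theta_t(1,\ldots,1)(x)|\lesssim 1$, and combined with the Schwartz decay and smoothness of $\varphi_t$ this shows that the subtracted term enjoys size and $y$-regularity bounds of the same form as \eqref{size} and \eqref{regy}. By construction $R_t(1,\ldots,1)(x)\equiv 0$, so Proposition~\ref{SF} applied to the operators $\{R_t\}_{t>0}$ yields the desired bound for $S_R$ in the full range $1<p_i<\infty$ with $\sum_i 1/p_i=1/2$.

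For $S_C$, hypothesis \eqref{Carlesonbound} says that $d\mu(x,t)=|\Theta_t(1,\ldots,1)(x)|^2\,\frac{dt\,dx}{t}$ is a Carleson measure on $\R^{n+1}_+$. Using $|P_tf_i(x)|\lesssim \mathcal Mf_i(x)$ and the classical nontangential Carleson embedding applied to the function $F(x,t)=\prod_i P_tf_i(x)$, whose nontangential maximal function is pointwise dominated by $\prod_i \mathcal Mf_i(x)$, I obtain
\begin{align*}
\|S_C(f_1,\ldots,f_m)\|_{L^2}^2 = \iint_{\R^{n+1}_+}\prod_{i=1}^m|P_tf_i(x)|^2\,d\mu(x,t)\lesssim \int_{\R^n}\prod_{i=1}^m\mathcal Mf_i(x)^2\,dx.
\end{align*}
Since $\sum_i 2/p_i=1$, Hölder's inequality with exponents $p_i/2$ together with the $L^{p_i}$ boundedness of $\mathcal M$ controls the right-hand side by $\prod_i\|f_i\|_{L^{p_i}}^2$, and taking square roots finishes the proof. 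The main technical point I expect is the careful verification of the kernel bounds for $r_t$ (so that Proposition~\ref{SF} genuinely applies) and the multilinear Carleson embedding step; both, however, reduce to standard linear arguments once the product structure $\prod_i P_tf_i$ is handled pointwise by the product of maximal functions.
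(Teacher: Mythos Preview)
Your proposal is correct and follows essentially the same approach as the paper: decompose $\Theta_t = R_t + M_{\Theta_t(1,\ldots,1)}\mathbb P_t$, apply Proposition~\ref{SF} to the remainder $R_t$ (which has the right kernel bounds and satisfies $R_t(1,\ldots,1)=0$), and control the second piece via the Carleson measure hypothesis. The only cosmetic difference is in the Carleson step: the paper applies H\"older first in $L^2(d\mu)$ with exponents $p_i/2$ and then invokes the $L^{p_i}$ Carleson embedding $\|P_tf_i\|_{L^{p_i}(d\mu)}\lesssim\|f_i\|_{L^{p_i}}$ on each factor, whereas you use the nontangential Carleson inequality on the product $\prod_iP_tf_i$ and apply H\"older afterwards; both routes are standard and equivalent here.
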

\begin{proof}
We decompose $\Theta_t=\Theta_t-M_{\Theta_t(1,...,1)}\mathbb P_t+M_{\Theta_t(1,...,1)}\mathbb P_t$, where $M_b$ is the operator defined as pointwise multiplication by $b$.  It is clear that $\Theta_t-M_{\Theta_t(1,...,1)}\mathbb P_t$ satisfies \eqref{size}, \eqref{regy} and $\Theta_t(1,...,1)-M_{\Theta_t(1,...,1)}\mathbb P_t(1,...,1)=0$.  Then by proposition \ref{SF}, it follows that the square function associated to $\Theta_t-M_{\Theta_t(1,...,1)}\mathbb P_t$ is bounded for all $1<p,p_1,...,p_m<\infty$.  Using this bound and that $|\Theta_t(1,...,1)(x)|^2\frac{dt\,dx}{t}$ is a Carleson measure (by assumption)
\begin{align*}
||S(f_1,...,f_m)||_{L^2}&\leq\left|\left|\(\int_0^\infty|\Theta_t(f_1,...,f_m)-M_{\Theta_t(1,...,1)}\mathbb P_t(f_1,...,f_m)|^2\frac{dt}{t}\)^\frac{1}{2}\right|\right|_{L^2}\\
&\hspace{2.5cm}+\prod_{i=1}^m\(\int_{\R^{n+1}_+}|P_tf_i(x)|^{p_i}|\Theta_t(1,...,1)(x)|^2\frac{dx\,dt}{t}\)^\frac{1}{p_i}\\
&\less\prod_{i=1}^m||f_i||_{L^{p_i}}.
\end{align*}
The final inequality uses the well-known Carleson measure estimate:  If $d\mu(x,t)$ is a Carleson measure, then $P:f\mapsto P_tf(x)$ is bounded from $L^q(\R^n)$ into $L^q(\R^{n+1}_+,d\mu)$ for $1<q<\infty$.
\end{proof}

The next result allows us to compare the multilinear dyadic averaging operators $\mathbb A_t$ and the multilinear smooth approximation to the identity operators $\mathbb P_t$. This comparison principle will be important in the proof of Theorem \ref{t:main}.  This is a particular case of a multilinear version of a result of Duoandikoetxea-Rubio de Francia in \cite{DRdF}.

\begin{proposition}\label{p:error}
Let $A_t$, $P_t$, $\mathbb A_t$ and $\mathbb P_t$ be as above.  Then for all $1<p_i<\infty$, $i=1,...,m$, we have the bound
\begin{align*}
\left|\left|\(\int_0^\infty|\mathbb A_t(f_1,...,f_m)-\mathbb P_t(f_1,...,f_m)|^2\frac{dt}{t}\)^\frac{1}{2}\right|\right|_{L^p}\less\prod_{i=1}^m||f_i||_{L^{p_i}}.
\end{align*}
Note that this even holds for $\frac{1}{m}<p<\infty$ as long as $1<p_i<\infty$.
\end{proposition}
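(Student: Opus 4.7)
The plan is to reduce the multilinear estimate to the scalar ($m=1$) case, which is the classical Duoandikoetxea--Rubio de Francia bound
\begin{align*}
\left|\left|\(\int_0^\infty|A_tf-P_tf|^2\frac{dt}{t}\)^\frac{1}{2}\right|\right|_{L^q}\less ||f||_{L^q},\qquad 1<q<\infty.
\end{align*}
The reduction is effected by the elementary telescoping identity
\begin{align*}
\prod_{i=1}^mA_tf_i-\prod_{i=1}^mP_tf_i=\sum_{i=1}^m\(\prod_{j<i}A_tf_j\)(A_tf_i-P_tf_i)\(\prod_{j>i}P_tf_j\),
\end{align*}
which writes the multilinear error as a sum of $m$ terms, each carrying the scalar difference $A_tf_i-P_tf_i$ in the $i$-th slot and ordinary dyadic averages or smooth mollifications in the others.

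For each such term I would argue pointwise first. Both $|A_tf(x)|$ and $|P_tf(x)|$ are dominated by $Mf(x)$ uniformly in $t>0$, where $M$ is the Hardy--Littlewood maximal operator; this uniformity is what allows me to pull the $j\ne i$ factors outside the $L^2(dt/t)$ norm, obtaining
\begin{align*}
&\(\int_0^\infty\left|\prod_{j<i}A_tf_j\cdot(A_tf_i-P_tf_i)\cdot\prod_{j>i}P_tf_j\right|^2\frac{dt}{t}\)^\frac{1}{2}\\
&\qquad\less\(\prod_{j\ne i}Mf_j\)\(\int_0^\infty|A_tf_i-P_tf_i|^2\frac{dt}{t}\)^\frac{1}{2}.
\end{align*}
Taking $L^p$ norms and applying H\"older's inequality with exponents $(p_1,\ldots,p_m)$ (admissible exactly because $\frac{1}{p}=\sum\frac{1}{p_i}$), then invoking the scalar square function bound on the $i$-th factor and the Hardy--Littlewood maximal inequality on the remaining $m-1$ factors (each requiring only $1<p_i<\infty$), yields a bound of size $\less\prod_{k=1}^m||f_k||_{L^{p_k}}$ for each of the $m$ terms.

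Summing the $m$ terms finishes the proof. The allowed range on $p$ is exactly $\frac{1}{m}<p<\infty$, since the only constraint this argument imposes is $1<p_i<\infty$ for each $i$. The argument is essentially routine once the telescoping identity is written down, so I do not anticipate a real obstacle; the one point requiring a moment's care is the uniformity in $t$ of the domination $|A_tf|+|P_tf|\less Mf$, which is precisely what lets us decouple the $j\ne i$ factors from the $L^2(dt/t)$ integration and reduce everything to the scalar case.
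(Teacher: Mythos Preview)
Your proof is correct and is essentially identical to the paper's own argument: the same telescoping identity $\mathbb A_t-\mathbb P_t=\sum_j\mathbb E_t^j$, the same pointwise domination $\sup_t(|A_tf|+|P_tf|)\less Mf$ to strip off the $j\neq i$ factors, and the same appeal to the scalar Duoandikoetxea--Rubio de Francia square function bound together with H\"older and the maximal inequality.
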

\begin{proof}
Define for $j=1,...,m$
\begin{align*}
\mathbb E_t^j(f_1,...,f_m)=\(\prod_{i=1}^{j-1}A_tf_i\)(A_tf_j-P_tf_j)\(\prod_{i=j+1}^mP_tf_i\).
\end{align*}
Here we use the convection that $\prod_{i=1}^0A_i=\prod_{i=m+1}^mP_t=1$.  Then we have the following decomposition by successively adding and subtracting the term $A_tf_1\cdots A_tf_{j}P_tf_{j+1}\cdots P_tf_m$
\begin{align*}
\mathbb A_t(f_1,...,f_m)-\mathbb P_t(f_1,...,f_m)&=\mathbb E_t^1(f_1,...,f_m)+A_tf_1\(\prod_{i=2}^mA_tf_i-\prod_{i=2}^mP_tf_i\)\\
&=\sum_{j=1}^2\mathbb E_t^j(f_1,...,f_m)+A_tf_1A_tf_2\(\prod_{i=3}^mA_tf_i-\prod_{i=3}^mP_tf_i\)\\
&=\sum_{j=1}^m\mathbb E_t^j(f_1,...,f_m).
\end{align*}
It is a standard argument to show that  $\sup_{t>0}|P_tf(x)|\less Mf(x)$ where $M$ is the Hardy-Littlewood maximal function, and the same inequality holds replacing $P_t$ with $A_t$.  Then we use the linear bound of $A_t-P_t$ which was proved by Duoandikoetxea-Rubio de Francia \cite{DRdF}
\begin{align*}
\left|\left|\(\int_0^\infty|\mathbb E_t^j(f_1,...,f_m)|^2\frac{dt}{t}\)^\frac{1}{2}\right|\right|_{L^p}&\less\left|\left|\(\int_0^\infty|(A_t-P_t)f_j|^2\frac{dt}{t}\)^\frac{1}{2}\prod_{i\neq j}Mf_i\right|\right|_{L^p}\\
&\leq\left|\left|\(\int_0^\infty|(A_t-P_t)f_j|^2\frac{dt}{t}\)^\frac{1}{2}\right|\right|_{L^{p_j}}\prod_{i\neq j}||Mf_i||_{L^{p_i}}\\
&\less\prod_{i=1}^m||f||_{L^{p_i}}.
\end{align*}
The square function bound for $\mathbb A_t-\mathbb P_t$ easily follows.
\end{proof}

\section{Proof of Theorem \ref{t:main} with p=2}

We proceed by reducing our arguments to the dyadic case.  Using dyadic covering properties it is easy to see that if \eqref{Carlesonbound} holds for all dyadic cubes, then \eqref{Carlesonbound} holds for all cubes $Q$ with at slightly larger constant.  In the following, we prove \eqref{Carlesonbound} for dyadic cubes to conclude \eqref{Lpbound} for $p=2$, and then proceed with other techniques in the next section.

\subsection{Decomposition of Dyadic Cubes}

We start with a proposition similar to one used in \cite{Ho}, applied to $m$ collections of pseudo-accretive systems $\{b_Q^i\}$ for $i=1,...,m$.

\begin{proposition}\label{p:decomp}
Given an $m$ linear compatible systems of functions $\{b_Q^i\}$ indexed by dyadic cubes for $i=1,...,m$ satisfying \eqref{bsize}-\eqref{thetacancel}, there exists a collection of non-overlapping dyadic subcubes of $Q$, $\{Q_k\}$, and $\eta\in(0,1)$
\begin{align}
&\sum_k|Q_k|<(1-\eta)|Q|,\label{eta}
\end{align}
where $\eta$ does not depend on $Q$, and for $t>\tau_Q(x)$ and $x\in Q$
\begin{align}
\frac{1}{2B_2B_3}<\prod_{i=1}^m|A_tb_Q^i(x)|\;\;\text{(here WLOG we assume that $B_2,B_3\geq1$)}\label{average}
\end{align}
where
\begin{align}
\tau_Q(x)&=\left\{\begin{array}{ll}\ell(Q_k)&x\in Q_k\\0&x\in E\end{array}\right.\label{tau}\\
E&=Q\backslash\bigcup_kQ_k.\notag
\end{align}
\end{proposition}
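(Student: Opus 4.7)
The plan is to carry out a stopping-time construction: select $\{Q_k\}$ to be the maximal dyadic subcubes of $Q$ on which the product of averages $\prod_{i=1}^m A_R b_Q^i$ drops below the threshold $\tfrac{1}{2B_2B_3}$, and then use the compatibility condition \eqref{bcompatible} together with a H\"older estimate based on \eqref{bsize} to show that the union of these stopping cubes cannot exhaust $Q$. The accretivity condition \eqref{baccretive} is what forces the good set $E$ to have positive proportion of $|Q|$.

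More precisely, let $\mathcal F$ be the collection of maximal dyadic subcubes $R\subseteq Q$ satisfying
$$\prod_{i=1}^m\left|\frac{1}{|R|}\int_R b_Q^i\right|\leq\frac{1}{2B_2B_3},$$
and set $\{Q_k\}=\mathcal F$. First I would observe that $Q$ itself is not in $\mathcal F$: if it were, then compatibility \eqref{bcompatible} would give
$$\left|\frac{1}{|Q|}\int_Q\prod_{i=1}^m b_Q^i\right|\leq B_3\prod_i\left|\frac{1}{|Q|}\int_Q b_Q^i\right|\leq B_3\cdot\frac{1}{2B_2B_3}=\frac{1}{2B_2},$$
contradicting \eqref{baccretive}. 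Hence every $Q_k$ is a proper subcube of $Q$, and the $Q_k$ are pairwise disjoint by maximality.

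Next I would estimate $|E|$ from below, where $E=Q\setminus\bigcup_k Q_k$. Splitting
$$\int_Q\prod_{i=1}^m b_Q^i=\sum_k\int_{Q_k}\prod_{i=1}^m b_Q^i+\int_E\prod_{i=1}^m b_Q^i,$$
the first sum is bounded by applying \eqref{bcompatible} on each $Q_k$:
$$\sum_k\left|\int_{Q_k}\prod b_Q^i\right|\leq B_3\sum_k|Q_k|\prod_i|A_{Q_k}b_Q^i|\leq\frac{1}{2B_2}\sum_k|Q_k|\leq\frac{|Q|}{2B_2}.$$
For the second term, apply H\"older with exponents $q_1,\dots,q_m,q'$ where $\tfrac1{q'}=1-\tfrac1q=1-\sum\tfrac1{q_i}$, and use \eqref{bsize}:
$$\left|\int_E\prod b_Q^i\right|\leq\prod_i\Bigl(\int_E|b_Q^i|^{q_i}\Bigr)^{1/q_i}|E|^{1-1/q}\leq(B_1|Q|)^{1/q}|E|^{1-1/q}.$$
Combining with the lower bound $|Q|/B_2\leq|\int_Q\prod b_Q^i|$ from \eqref{baccretive} yields
$$\frac{|Q|}{2B_2}\leq(B_1|Q|)^{1/q}|E|^{1-1/q},$$
which gives $|E|\geq\eta|Q|$ for $\eta=(2B_2B_1^{1/q})^{-q/(q-1)}>0$, and hence \eqref{eta}.

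Finally I would verify the lower averaging bound \eqref{average}. For $x\in E$ and $0<t<\ell(Q)$, the cube $Q(x,t)$ is a dyadic subcube of $Q$ containing $x$, hence by the definition of $E$ it cannot be contained in (or equal to) any element of $\mathcal F$, so $\prod_i|A_{Q(x,t)}b_Q^i|>\tfrac{1}{2B_2B_3}$. For $x\in Q_k$ and $t>\tau_Q(x)=\ell(Q_k)$, the cube $Q(x,t)$ has side length strictly larger than $\ell(Q_k)$, so it is a strict ancestor of $Q_k$ in $Q$; maximality of $Q_k$ in $\mathcal F$ then implies $Q(x,t)\notin\mathcal F$, and again $\prod_i|A_{Q(x,t)}b_Q^i|>\tfrac{1}{2B_2B_3}$. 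The main thing to get right is bookkeeping: the H\"older step above is the only nontrivial point, since $\sum\tfrac{1}{q_i}=\tfrac{1}{q}<1$ rather than $1$, so the extra factor $|E|^{1-1/q}$ must be carried through — and it is exactly this factor that produces a nontrivial lower bound on $|E|$, ensuring $\eta\in(0,1)$.
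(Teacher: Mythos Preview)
Your argument is correct and follows the same stopping-time template as the paper, but with a different stopping criterion. The paper normalizes by $a=\frac{1}{|Q|}\int_Q\prod_i b_Q^i$ and stops on maximal subcubes $R$ where $\mathrm{Re}\bigl[\frac{1}{a|R|}\int_R\prod_i b_Q^i\bigr]\leq\tfrac12$; it then invokes compatibility \eqref{bcompatible} to pass from the lower bound on $\bigl|\frac{1}{|R|}\int_R\prod_i b_Q^i\bigr|$ (given by failure of the stopping condition) to the desired lower bound on $\prod_i|A_Rb_Q^i|$. You instead stop directly on the quantity $\prod_i|A_Rb_Q^i|$ that appears in the conclusion, and use compatibility in the reverse direction---to bound $\bigl|\int_{Q_k}\prod_i b_Q^i\bigr|$ from above on each stopping cube---so that the splitting of $\int_Q\prod_i b_Q^i$ and the triangle inequality yield the $|E|$ estimate. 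Your route avoids the complex normalization and real-part bookkeeping, at the cost of invoking \eqref{bcompatible} once more (to check $Q\notin\mathcal F$). The H\"older step and the resulting value of $\eta$ differ only in harmless constants. Both arguments are equally valid and of comparable length.
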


\begin{proof}
Fix a dyadic cube $Q\subset\R^n$ and define
$$a=\frac{1}{|Q|}\int_Q\prod_{i=1}^mb_Q^i(x)dx$$
which satisfies $|a|\geq\frac{1}{B_2}$, where $B_2$ is from \eqref{baccretive}.  Now choose from the dyadic children of $Q$ the cubes that are maximal with respect to the property
\begin{align*}
Re\[\frac{1}{a|Q_j|}\int_{Q_j}\prod_{i=1}^mb_Q^i(x)dx\]\leq\frac{1}{2},
\end{align*}
i.e. $Q_j\subset Q$ is the largest dyadic cube such that the above inequality holds.  By the properties of dyadic cubes, these maximal cubes are non-overlapping.  This stopping time criterion well defines a collection of cubes since
\begin{align*}
Re\[\frac{1}{a|Q|}\int_{Q}\prod_{i=1}^mb_Q^i(x)dx\]=1
\end{align*}
If $x\in Q_k$ for some $k$ and $t>\tau_Q(x)$, then using \eqref{bcompatible}
\begin{align*}
|\mathbb A_t(b_Q^1,...,b_Q^m)(x)|&=\prod_{i=1}^m\left|\frac{1}{|Q(x,t)|}\int_{Q(x,t)}b_Q^i(y)dy\right|\\
&\geq \frac{1}{B_3}\left|\frac{1}{|Q(x,t)|}\int_{Q(x,t)}\prod_{i=1}^mb_Q^i(y)dy\right|\\
&\geq \frac{|a|}{B_3}Re\(\frac{1}{a|Q(x,t)|}\int_{Q(x,t)}\prod_{i=1}^mb_Q^i(y)dy\)\\
&\geq\frac{1}{2B_2B_3}
\end{align*}
Also if $x\in E$, then again using \eqref{bcompatible} and by the stopping time criterion it follows that
\begin{align*}
|\mathbb A_t(b_Q^1,...,b_Q^m)(x)|&=\prod_{i=1}^m\left|\frac{1}{|Q(x,t)|}\int_{Q(x,t)}b_Q^i(y)dy\right|\\
&\geq\frac{1}{B_3}\left|\frac{1}{|Q(x,t)|}\int_{Q(x,t)}\prod_{i=1}^mb_Q^i(y)dy\right|\\
&\geq\frac{|a|}{B_3}Re\(\frac{1}{a|Q(x,t)|}\int_{Q(x,t)}\prod_{i=1}^mb_Q^i(y)dy\)\\
&\geq\frac{1}{2B_2B_3}.
\end{align*}
Now we also have for $i=1,...,m$ that
\begin{align*}
|Q|&=Re\[\frac{1}{a}\int_Q\prod_{i=1}^mb_Q^i(x)dx\]\\
&\leq\sum_kRe\[\frac{1}{a}\int_{Q_k}\prod_{i=1}^mb_Q^i(x)dx\]+\int_E\left|\prod_{i=1}^mb_Q^i(x)\right|dx\\
&\leq\frac{1}{2}\sum_k|Q_k|+|E|^\frac{1}{q'}\(\int_E\left|\prod_{i=1}^mb_Q^i(x)\right|^qdx\)^\frac{1}{q}\\
&\leq\frac{1}{2}|Q|+|E|^\frac{1}{q'}\prod_{i=1}^m\(\int_Q|b_Q^i(x)|^{q_i}dx\)^\frac{1}{q_i}\\
&\leq\frac{1}{2}|Q|+B_1^m|E|^\frac{1}{q'}|Q|^\frac{1}{q}.
\end{align*}
It follows that $\eta|Q|<|E|$ where we may take $\eta=\frac{1}{(2B_1^m)^{q'}}\in(0,1)$.
\end{proof}

\subsection{Reduction to Carleson Estimates}

We pause for a moment to discuss the strategy of the remainder of the proof of Theorem \ref{t:main} for $p=2$.  By Proposition \ref{p:Carlesonbound} and the discussion at the beginning of this section, it is sufficient to show that the estimate \eqref{Carlesonbound} holds for dyadic cubes.  In order to show this, we prove an intermediate estimate:  For all dyadic cubes $Q\subset\R^n$
\begin{align}
\int_Q\(\int_{\tau_Q(x)}^{\ell(Q)}|\Theta_t(1,...,1)(x)|^2\frac{dt}{t}\)^\frac{q}{2}dx\leq C|Q|.\label{truncate}
\end{align}
The remainder of this section is dedicated to proving \eqref{truncate}, and the next section completes the proof of Theorem \ref{t:main} for $p=2$ by proving \eqref{Carlesonbound} from the reduction in this section.

\begin{proposition}\label{p:truncate}
For all dyadic cubes $Q\subset\R^n$, \eqref{truncate} holds with $\tau_Q$ defined in \eqref{tau}
\end{proposition}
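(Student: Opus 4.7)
\textbf{Proof plan for Proposition \ref{p:truncate}.}

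The key idea is to exploit the lower bound \eqref{average} to absorb a factor of $\mathbb A_t(b_Q^1,\dots,b_Q^m)$ into $\Theta_t(1,\dots,1)$, then trade the dyadic average for the smooth average and reassemble the pieces into quantities controlled by the hypothesis \eqref{thetacancel} and by the multilinear $T1$ theorem (Proposition \ref{SF}). Specifically, for $x\in Q$ and $t>\tau_Q(x)$, Proposition \ref{p:decomp} gives
\[
|\Theta_t(1,\dots,1)(x)|\;\lesssim\;|\Theta_t(1,\dots,1)(x)\,\mathbb A_t(b_Q^1,\dots,b_Q^m)(x)|.
\]
I then insert $\mathbb P_t$ and the operator $\Theta_t$ to produce the decomposition
\[
\Theta_t(1,\dots,1)\,\mathbb A_t(b_Q^1,\dots,b_Q^m)=\Theta_t(b_Q^1,\dots,b_Q^m)+R_t^{(1)}+R_t^{(2)},
\]
where $R_t^{(1)}=\bigl(M_{\Theta_t(1,\dots,1)}\mathbb P_t-\Theta_t\bigr)(b_Q^1,\dots,b_Q^m)$ and $R_t^{(2)}=\Theta_t(1,\dots,1)\bigl(\mathbb A_t-\mathbb P_t\bigr)(b_Q^1,\dots,b_Q^m)$. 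Applying the triangle inequality in the $L^2(\tfrac{dt}{t})$ norm and then raising to the $q$th power reduces the claim to bounding three separate terms by $|Q|$.

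\textbf{Main term.} The contribution of $\Theta_t(b_Q^1,\dots,b_Q^m)$ on $(\tau_Q(x),\ell(Q))\subseteq(0,\ell(Q))$ is controlled directly by the hypothesis \eqref{thetacancel}. This is where the cancellation assumption on the pseudo-accretive system enters.

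\textbf{Term $R_t^{(1)}$.} The plan is to verify that the auxiliary $m$-linear operator $M_{\Theta_t(1,\dots,1)}\mathbb P_t-\Theta_t$ has kernel satisfying the size estimate \eqref{size} and regularity \eqref{regy}, and vanishes on $(1,\dots,1)$. Size/regularity follow from the smoothness of $\varphi$ combined with the pointwise bound $|\Theta_t(1,\dots,1)(x)|\lesssim 1$ (an immediate consequence of \eqref{size} since $N+\gamma>n$), while the cancellation is immediate. Proposition \ref{SF} then yields the $L^{q_1}\times\cdots\times L^{q_m}\to L^q$ bound for its square function. Combined with the size estimate \eqref{bsize}, the exponent identity $\sum_i q/q_i=1$ gives the bound by $|Q|$.

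\textbf{Term $R_t^{(2)}$.} Here I use the uniform bound $|\Theta_t(1,\dots,1)(x)|\lesssim 1$ to dominate $R_t^{(2)}$ pointwise by $|(\mathbb A_t-\mathbb P_t)(b_Q^1,\dots,b_Q^m)|$, and then apply Proposition \ref{p:error} with indices $p=q$, $p_i=q_i$; \eqref{bsize} and the Hölder identity again yield the bound by $|Q|$.

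\textbf{Main obstacle.} The delicate step is handling $R_t^{(1)}$: one must confirm that the paraproduct-like subtraction $M_{\Theta_t(1,\dots,1)}\mathbb P_t-\Theta_t$ inherits the kernel regularity needed to invoke the multilinear $T1$-theorem. This hinges on the uniform boundedness of $\Theta_t(1,\dots,1)$ and on propagating size/regularity through the product kernel $\Theta_t(1,\dots,1)(x)\prod_i\varphi_t(x-y_i)$, which is where the smoothness of $\varphi$ and the regularity hypotheses \eqref{regy}–\eqref{regx} on $\theta_t$ are used. Once this verification is in place, the other two pieces follow essentially by direct application of the preliminary results.
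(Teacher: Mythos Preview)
Your proposal is correct and follows essentially the same argument as the paper: the paper also multiplies by $\mathbb A_t(b_Q^1,\dots,b_Q^m)$, writes $M_{\Theta_t(1,\dots,1)}\mathbb A_t=M_{\Theta_t(1,\dots,1)}(\mathbb A_t-\mathbb P_t)+(M_{\Theta_t(1,\dots,1)}\mathbb P_t-\Theta_t)+\Theta_t$, and handles the three pieces by Proposition~\ref{p:error}, Proposition~\ref{SF}, and hypothesis~\eqref{thetacancel} respectively. The only cosmetic difference is that your labels $R_t^{(1)}$ and $R_t^{(2)}$ are swapped relative to the paper's, and you spell out the kernel verification for $M_{\Theta_t(1,\dots,1)}\mathbb P_t-\Theta_t$ that the paper leaves implicit.
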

\begin{proof}
We have from Proposition \ref{p:decomp} that $|\mathbb A_t(b_Q^1,...,b_Q^m)(x)|\geq\frac{1}{2B_2^mB_3}$, so it follows that
\begin{align*}
&\int_Q\(\int_{\tau_Q(x)}^{\ell(Q)}|\Theta_t(1,...,1)(x)|^2\frac{dt}{t}\)^\frac{q}{2}dx\\
&\hspace{2cm}\leq2B_2^mB_3\int_Q\(\int_{\tau_Q(x)}^{\ell(Q)}|\Theta_t(1,...,1)(x)\mathbb A_t(b_Q^1,...,b_Q^m)(x)|^2\frac{dt}{t}\)^\frac{q}{2}dx.
\end{align*}
Now we consider the operator $M_{\Theta_t(1,...,1)}\mathbb A_t(f_1,...,f_m)$, which we decompose in the following way
\begin{align*}
M_{\Theta_t(1,...,1)}\mathbb A_t&=M_{\Theta_t(1,...,1)}(\mathbb A_t-\mathbb P_t)+(M_{\Theta_t(1,...,1)}\mathbb P_t-\Theta_t)+\Theta_t\\
&=R_t^{(1)}+R_t^{(2)}+\Theta_t
\end{align*}
By Proposition \ref{p:error}, it follows that
\begin{align*}
\int\(\int_0^\infty|R_t^{(1)}(b_Q^1,...,b_Q^m)|^2\frac{dt}{t}\)^\frac{q}{2}dx&\less\prod_{i=1}^m||b_Q^i||_{L^{q_1}}^q\less|Q|.
\end{align*}
Using Proposition \ref{SF}, it follows that the $R_t^{(2)}$ term is controlled as desired
\begin{align*}
\int\(\int_0^\infty|R_t^{(2)}(b_Q^1,...,b_Q^m)|^2\frac{dt}{t}\)^\frac{q}{2}dx&\less|Q|,
\end{align*}
and by hypothesis \eqref{thetacancel},
\begin{align*}
\int_Q\(\int_0^{\ell(Q)}|\Theta_t(b_Q^1,...,b_Q^m)|^2\frac{dt}{t}\)^\frac{q}{2}dx&\leq B_3|Q|.
\end{align*}
Then we may choose $C$ independent of $Q$ such that \eqref{truncate} holds.
\end{proof}

\subsection{End of the Proof}

Finally we use the reduction from the previous section to complete the proof of Theorem \ref{t:main}.

\begin{lemma}\label{l:weakestimate}
There exist $N>0$ and $\beta\in(0,1)$ such that for every dyadic cube $Q$
\begin{align}
|\{x\in Q:g_Q(x)>N\}|\leq(1-\beta)|Q|\label{badset}
\end{align}
where
\begin{align}
g_Q(x)=\(\int_0^{\ell(Q)}|\Theta_t(1,...,1)(x)|^2\frac{dt}{t}\)^\frac{1}{2}\label{gQ}
\end{align}
where $\tau_Q(x)$ is defined as in \eqref{tau}.
\end{lemma}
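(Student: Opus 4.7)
My plan is to deduce this weak-type estimate directly from the $L^q$ bound in Proposition \ref{p:truncate} combined with the geometric information from Proposition \ref{p:decomp}. The key observation is that for $x$ in the ``good'' set $E = Q \setminus \bigcup_k Q_k$ one has $\tau_Q(x) = 0$, so $g_Q(x)$ there coincides with the truncated square function
\begin{equation*}
\widetilde g_Q(x) = \left(\int_{\tau_Q(x)}^{\ell(Q)} |\Theta_t(1,\dots,1)(x)|^2 \frac{dt}{t}\right)^{1/2}.
\end{equation*}
(If, as I suspect, the definition of $g_Q$ in the statement contains a typo and should really have $\tau_Q(x)$ as the lower limit of integration, then exactly the same argument applies on all of $Q$ in place of $E$ and is cleaner still.)

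Next I would invoke Proposition \ref{p:truncate} to obtain
\begin{equation*}
\int_E g_Q(x)^q\,dx \;\leq\; \int_Q \widetilde g_Q(x)^q\,dx \;\leq\; C|Q|,
\end{equation*}
with $C$ independent of $Q$, and then apply Chebyshev's inequality to conclude
\begin{equation*}
|\{x \in E : g_Q(x) > N\}| \leq \frac{C|Q|}{N^q}
\end{equation*}
for every $N>0$.

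Recalling from Proposition \ref{p:decomp} that $|E| \geq \eta|Q|$ for the universal constant $\eta \in (0,1)$, I would choose $N = (2C/\eta)^{1/q}$ so that the right-hand side above is at most $(\eta/2)|Q|$. Then
\begin{align*}
|\{x \in Q : g_Q(x) \leq N\}|
&\geq |\{x \in E : g_Q(x) \leq N\}| \\
&\geq |E| - \tfrac{\eta}{2}|Q| \\
&\geq \tfrac{\eta}{2}|Q|,
\end{align*}
which is equivalent to \eqref{badset} with $\beta = \eta/2 \in (0,1)$.

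The real content of the lemma is already sitting in Propositions \ref{p:truncate} and \ref{p:decomp}; at this stage the argument is just Chebyshev plus some measure-theoretic bookkeeping, so there is no serious obstacle. The only point requiring a moment's care is the identification of $g_Q$ with $\widetilde g_Q$ on $E$, and that follows immediately from the definition of $\tau_Q$ in \eqref{tau}.
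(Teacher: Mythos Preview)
Your proposal is correct and follows essentially the same route as the paper: split $Q$ into $E$ and $\bigcup_k Q_k$, use that $\tau_Q=0$ on $E$ so Proposition~\ref{p:truncate} controls $g_Q$ there via Chebyshev, and combine with the bound $|E|\geq\eta|Q|$ from Proposition~\ref{p:decomp} to obtain $\beta=\eta/2$. The only cosmetic difference is that the paper bounds $|\Omega_N|$ directly as $\sum_k|Q_k|+|\{x\in E:g_Q(x)>N\}|\leq(1-\eta)|Q|+\tfrac{C}{N^q}|Q|$, whereas you bound the complement from below; the arithmetic is identical.
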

\begin{proof}
Fix a dyadic cube $Q\subset\R^n$, and define for $N>0$
$$\Omega_N=\{x\in Q:g_Q(x)>N\}$$
Let $Q_k$ and $E$ be as in Proposition \ref{p:decomp}, without loss of generality take $N,C>1$, and using Chebychev's inequality it follows that
\begin{align*}
|\Omega_N|&\leq\sum_k|Q_k|+\left|\left\{x\in E:g_Q(x)>N\right\}\right|\\
&\leq(1-\eta)|Q|+\frac{C}{N^q}|Q|
\end{align*}
where $C$ is chosen from \eqref{truncate} in Proposition \ref{p:truncate} as discussed above.  Now fix $N$ large enough so that $\frac{C}{N^q}<\eta/2$.  Then \eqref{badset} easily follows
\begin{align*}
|\Omega_N|&\leq(1-\eta)|Q|+\frac{C}{N^q}|Q|<(1-\beta)|Q|
\end{align*}
where $\beta=\frac{\eta}{2}>0$.
\end{proof}
We can finally prove the main theorem for $p=2$
\begin{proof}
Fix $\epsilon\in(0,1)$ and define for dyadic cube $Q\subset\R^n$ with $\ell(Q)>\epsilon$
$$g_{Q,\epsilon}(x)=\(\int_{\epsilon}^{\min(1/\epsilon,\ell(Q))}|\Theta_t(1,...,1)(x)|^2\frac{dt}{t}\)^\frac{1}{2}$$
and $g_{Q,\epsilon}=0$ if $\ell(Q)\leq\epsilon$.  Also define
$$K(\epsilon)=\sup_Q\frac{1}{|Q|}\int_Qg_{Q,\epsilon}(x)dx$$
where the supremum is over all dyadic cubes.  Fix a dyadic cube $Q$ and define
$$\Omega_{N,\epsilon}=\{x\in Q:g_{Q,\epsilon}(x)>N\}.$$
Note that $g_{Q,\epsilon}$ is defined depending only on the cube $Q$ and $\epsilon$, completely independent of $Q_k$, $\tau_Q(x)$ and $\eta$.  It follows from \eqref{regx} that $\Theta_t(1,...,1)(x)$ is $\gamma$-H\"older continuous and hence so is $g_{Q,\epsilon}$ (with constant depending on $\epsilon$).
\begin{align*}
|g_{Q,\epsilon}(x)-g_{Q,\epsilon}(x')|^2&\less\int_{\epsilon}^{\min(1/\epsilon,\ell(Q))}\(\int_{\R^{mn}}\frac{t^{-mn}(t^{-1}|x-x'|)^\gamma}{\prod_{i=1}^m(1+t^{-1}|x-y_i|)^{N}}\prod_{i=1}^mdy_i\)^2\frac{dt}{t}\\
&\less\epsilon^{-2-\gamma}|x-x'|^\gamma.
\end{align*}
Then $g_{Q,\epsilon}$ is continuous, $\Omega_{N,\epsilon}$ is open, and so we may make the Whitney decomposition $Q_j$ of $\Omega_{N,\epsilon}$.  That is there exists a collection of cubes $\{Q_j\}$ such that
\begin{align}
&\bigcup_jQ_j=\Omega_{N,\epsilon}\\
&\sqrt n\ell(Q_j)\leq dist(Q_j,\Omega^c)\leq4\sqrt n\ell(Q_j)\\
&\partial Q_j\cap Q_k\neq\emptyset\Longrightarrow\frac{1}{4}\leq\frac{\ell(Q_j)}{\ell(Q_k)}\leq4\\
&\text{ Given a cube, there are at most }12^n\text{ that touch it}.
\end{align}
Then if $F_{N,\epsilon}=Q\backslash\Omega_{N,\epsilon}$
\begin{align*}
\int_Qg_{Q,\epsilon}^2(x)dx&=\int_{F_{N,\epsilon}}g_{Q,\epsilon}^2(x)dx+\sum_j\int_{Q_j}g_{Q,\epsilon}^2(x)dx\\
&\hspace{-1cm}\leq N^2|Q|+\sum_j\int_{Q_j}\int_{\epsilon}^{\min(1/\epsilon,\ell(Q_j))}|\Theta_t(1,...,1)(x)|^2\frac{dt\,dx}{t}\\
&\hspace{2cm}+\sum_j\int_{Q_j}\int_{\max(\epsilon,\ell(Q_j)}^{\min(1/\epsilon,\ell(Q))}|\Theta_t(1,...,1)(x)|^2\frac{dt\,dx}{t}\\
&\hspace{-1cm}\leq N^2|Q|+K(\epsilon)\sum_j|Q_j|+\sum_j\int_{Q_j}\int_{\max(\epsilon,\ell(Q_j))}^{\min(1/\epsilon,\ell(Q))}|\Theta_t(1,...,1)(x)|^2\frac{dt\,dx}{t}\\
&\hspace{-1cm}\leq N^2|Q|+K(\epsilon)\eta |Q|+\sum_j\int_{Q_j}\int_{\max(\epsilon,\ell(Q_j))}^{\min(1/\epsilon,\ell(Q))}|\Theta_t(1,...,1)(x)|^2\frac{dt\,dx}{t}.
\end{align*}
To control the last term, since $Q_j$ is a Whitney decomposition, there exists $x_j\in F_{N,\epsilon}$ such that
$$dist(x_j,Q_j)\leq(4\sqrt n+1)\ell(Q_j).$$
We have for $x\in Q_j$
\begin{align*}
|\Theta_t(1,...,1)(x)-\Theta_t(1,...,1)(x_j)|&\leq\int_{\R^{mn}}|\theta_t(x,y_1,...,y_m)-\theta_t(x_j,y_1,...,y_m)|\prod_{i=1}^mdy_i\\
&\less\int_{\R^{mn}}\frac{t^{-2n}(t^{-1}|x-x_j|)^\gamma}{\prod_{i=1}^m(1+t^{-1}|x-y_i|)^N}\prod_{i=1}^mdy_i\\
&\less(t^{-1}\ell(Q_j))^\gamma.
\end{align*}
So choose $c_1$ which depends only on the dimension such that the inequality
$$|\Theta_t(1,...,1)(x)-\Theta_t(1,...,1)(x_j)|\leq c_1(t^{-1}\ell(Q_j))^\gamma$$
holds for all $x\in Q_j$.  Then
\begin{align*}
&\sum_j\int_{Q_j}\int_{\max(\epsilon,\ell(Q_j))}^{\min(1/\epsilon,\ell(Q))}|\Theta_t(1,...,1)(x)|^2\frac{dt\,dx}{t}\\
&\hspace{.5cm}\leq\sum_j\int_{Q_j}\int_{\max(\epsilon,\ell(Q_j))}^{c_1\ell(Q_j)}|\Theta_t(1,...,1)(x)|^2\frac{dt\,dx}{t}\\
&\hspace{1.5cm}+\sum_j\int_{Q_j}\int_{c_1\ell(Q)}^{\min(\ell(Q),1/\epsilon)}|\Theta_t(1,...,1)(x_j)|^2\frac{dt\,dx}{t}\\
&\hspace{2cm}+\sum_j\int_{Q_j}\int_{c_1\ell(Q)}^{\min(\ell(Q),1/\epsilon)}|\Theta_t(1,...,1)(x)-\Theta_t(1,...,1)(x_j)|^2\frac{dt\,dx}{t}\\
&\hspace{.5cm}=I+II+III.
\end{align*}
We have that
\begin{align*}
I&\leq||\Theta_t(1,...,1)||_{L^\infty}^2\sum_j\int_{Q_j}\int_{\ell(Q_j)}^{c_1\ell(Q_j)}\frac{dt\,dx}{t}\less c_1\sum_j|Q_j|\less|Q|.
\end{align*}
Since $x_j\in F_{N,\epsilon}$ and $g_{Q,\epsilon}(x_j)\leq N$, it follows that
\begin{align*}
II&\leq\sum_j\int_{Q_j}\int_0^{\ell(Q)}|\Theta_t(1,...,1)(x_j)|^2\frac{dt\,dx}{t}=\sum_j|Q_j|g_{Q,\epsilon}(x_j)^2\less N^2|Q|.
\end{align*}
For all $x\in Q_j$, $|\Theta_t(1,...,1)(x)-\Theta_t(1,...,1)(x_j)|\leq c_1(t^{-1}\ell(Q_j))^\alpha$, so
\begin{align*}
III&\less\sum_j\int_{Q_j}\int_{c_1\ell(Q_j)}^\infty(t^{-1}\ell(Q_j))^\alpha \frac{dt\,dx}{t}\less\sum_{j}|Q_j|\leq|Q|.
\end{align*}
Therefore $K(\epsilon)\leq C(1+N^2)+(1-\beta)K(\epsilon)$ and hence
$$K(\epsilon)\leq\frac{C(1+N^2)}{\beta}.$$
Therefore
\begin{align*}
\frac{1}{|Q|}\int_Q\int_0^{\ell(Q)}|\Theta_t(1,...,1)(x)|^2\frac{dt\,dx}{t}&=\sup_{0<\epsilon<1}\sup_{\ell(Q)>\epsilon}\frac{1}{|Q|}\int_Q\int_\epsilon^{\ell(Q)}|\Theta_t(1,...,1)(x)|^2\frac{dt\,dx}{t}\\
&=\sup_{0<\epsilon<1}K(\epsilon)\\
&\leq\frac{C(1+N^2)}{\beta}.
\end{align*}
Hence $|\Theta_t(1,...,1)(x)|^2\frac{dt\,dx}{t}$ is a Carleson measure and by Proposition \ref{Carlesonbound} the square function bound \eqref{Lpbound} holds with constant $C(1+N^2)/\beta$ for $p=2$ and $1<p_1,...,p_m<\infty$.
\end{proof}

This proves theorem \ref{t:main} for $p=2$.  In the following section we prove that this we can strengthen the conclusion of theorem can be strengthened to conclude that \eqref{Lpbound} holds for all $2\leq p<\infty$ and $1<p_1,...,p_m<\infty$, but first we make some remarks on compatible pseudo-accretive systems.

\subsection{A Comment on Compatible Pseudo-Accretive Systems}

The purpose of this discussion is to better understand the conditions \eqref{baccretive} and \eqref{bcompatible} through various examples.  In the first example we construct a class of non-trivial classes of compatible pseudo-accretive systems.

\subsubsection{Example \ref{ex1}}\label{ex1}
Suppose there exists $\epsilon>0$ such that $\epsilon\leq b_Q^i(x)\leq\epsilon^{-1}$ for a.e. $x\in Q$, all dyadic cubes $Q\subset\R^n$ and each $i=1,...,m$, then \eqref{baccretive} and \eqref{bcompatible} hold as well,
\begin{align*}
\epsilon^m\leq\left|\frac{1}{|R|}\int_R\prod_{i=1}^mb_Q^i(x)dx\right|&\leq\epsilon^{-m}\leq\epsilon^{-2m}\prod_{i=1}\left|\frac{1}{|R|}\int_Rb_Q^i(x)dx\right|.
\end{align*}
Notice that this is a uniform condition for $b_Q^i$.  That is there is no dependence between the functions, as long as they are each in this class of functions.  This class of functions includes many commonly used functions.  For example, the following functions defined for each dyadic cube $Q\subset\R^n$ satisfy $\epsilon<b_Q<\epsilon^{-1}$ uniformly on the cube $Q$ for some $\epsilon$.
\begin{align*}
&\text{Characteristic functions:}& &b_Q(x)=\chi_Q(x)&\\
&\text{Gaussian functions:}& &b_Q(x)=e^{-\frac{|x-x_Q|^2}{\ell(Q)^2}}&\\
&\text{Poisson kernels:}& &b_Q(x)=\frac{\ell(Q)^{n+1}}{(\ell(Q)^2+|x-x_Q|^2)^\frac{n+1}{2}}.&
\end{align*}

\subsubsection{Example \ref{ex2}}\label{ex2}
Consider the pseudo-accretive systems on $\R$ for dyadic cubes $Q_{j,k}=[j2^{-k},(j+1)2^{-k})$ defined
\begin{align*}
&b_{Q_{j,k}}^1=b_{j,k}^1=\chi_{[j2^{-k},(j+3/4)2^{-k})}-\chi_{[(j+3/4)2^{-k},(j+1)2^{-k})}\\
&b_{Q_{j,k}}^2=b_{j,k}^2=\chi_{[(j+1/4)2^{-k},(j+1)2^{-k})}-\chi_{[j2^{-k},(j+1/4)2^{-k})}
\end{align*}
It follows that $b_{j,k}^i$ satisfies \eqref{baccretive} for $i=1,2$ by a quick computation
\begin{align*}
\frac{1}{|Q_{j,k}|}\int_{Q_{j,k}}b_{j,k}^1(x)dx=\frac{1}{|Q_{j,k}|}\int_{Q_{j,k}}b_{j,k}^2(x)dx=\frac{1}{2}.
\end{align*}
It is a bit more complicated to see that $b_{j,k}^1,b_{j,k}^2$ satisfy \eqref{bcompatible}, but it does hold:  For $R=Q_{j,k}$, it follows that the left hand side of \eqref{bcompatible} is zero so the inequality holds.  Now if $R\subset Q_{j,k}$ is any dyadic subcube contained in $[j2^{-k},(j+1/2)2^{-k})$, then $b_{j,k}^1=1$ on $R$ and
\begin{align*}
\frac{1}{|R|}\int_Rb_{j,k}^1(x)b_{j,k}^2(x)dx&=\frac{1}{|R|}\int_Rb_{j,k}^2(x)dx=\prod_{i=1}^2\frac{1}{|R|}\int_Rb_{j,k}^i(x)dx.
\end{align*}
A symmetric argument holds when $R\subset[(j+1/2)2^{-k},(j+1)2^{-k})$.  Therefore $b_{j,k}^1,b_{j,k}^2$ are compatible pseudo-accretive systems.  This example is especially interesting because there are subcubes where $b_{j,k}^i$ has mean zero, $b_{j,k}^1\cdot b_{j,k}^2$ has mean zero, but the particular structure of these functions allow for \eqref{baccretive} and \eqref{bcompatible} hold.

\subsubsection{Example \ref{ex3}}\label{ex3}
There exist pseudo-accretive systems that are not compatible.  To construct such a system, we consider the bilinear setting and $\R$.  Consider the cube $Q=[0,2]\subset\R$ and define
\begin{align*}
b_Q=b_Q^1(x)=b_Q^2(x)=(x-\frac{1}{2})\chi_{[0,2]}(x)
\end{align*}
We have that $b_Q^i$ satisfy \eqref{baccretive} for $i=1,2$
\begin{align*}
\left|\int_{[0,2]}b_Q(x)dx\right|&=1,
\end{align*}
but if we consider the dyadic subcube $[0,1]\subset[0,2]$, the functions violate \eqref{bcompatible}
\begin{align*}
&\left|\int_{[0,1]}b_Q^1(x)b_Q^2(x)dx\right|=\int_0^1(x^2-x+\frac{1}{2})dx=\frac{1}{3},\\
&\prod_{i=1}^2\left|\int_{[0,1]}b_Q^i(x)dx\right|=\(\int_0^1(x-\frac{1}{2})dx\)^2=0.
\end{align*}
Here it is apparent that the failure of condition \eqref{bcompatible} is caused by the cancellation of $b_Q^1$ and $b_Q^2$ in the same location.

From Examples \ref{ex1} and \ref{ex2}, we can see that there are non-trivial compatible pseudo-accretive system, even some with cancellation on dyadic subcubes.  Example \ref{ex3} demonstrates that there are pseudo-accretive systems that aren't compatible, and furthermore the functions in Example \ref{ex3} fail to satisfy the compatibility condition \eqref{bcompatible} because they have cancellation behavior in the same location.

\section{Extending Square Function Bounds}

In this section we prove a multilinear $BMO$ bound and use it as an endpoint for interpolation.  More precisely, we prove the following $L^\infty_c\times\cdots\times L^\infty_c\rightarrow BMO$ bound.

\begin{theorem}\label{t:BMO}
Suppose $\Theta_t$ satisfies \eqref{size}-\eqref{regx} and the square function $S$ associated to $\Theta_t$ is bounded from $L^{p_1}\times\cdots\times L^{p_m}$ into $L^p$ for some $1\leq p,p_i\leq\infty$ that satisfy \eqref{Holder}.  Then for all $f_1,...,f_m\in L_c^\infty$
\begin{align}
||S(f_1,...,f_m)||_{BMO}\less\prod_{i=1}^m||f_i||_{L^\infty}\label{BMO}
\end{align}
where the constant is independent of $f_i$ (and in particular the support of $f_i$) for $i=1,...,m$.
\end{theorem}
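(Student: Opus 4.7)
Fix a cube $Q$ with center $x_Q$ and let $Q^*=3Q$. Decompose each input as $f_i=f_i^0+f_i^\infty$ with $f_i^0=f_i\chi_{Q^*}$, so by multilinearity
\[\Theta_t(f_1,\ldots,f_m)=\sum_{\vec\sigma\in\{0,\infty\}^m}\Theta_t(f_1^{\sigma_1},\ldots,f_m^{\sigma_m}).\]
I propose the BMO centering constant
\[c_Q=\bigl\|\Theta_t(f_1^\infty,\ldots,f_m^\infty)(x_Q)\bigr\|_{L^2(dt/t)},\]
which is finite because $f_i^\infty$ is supported in $(Q^*)^c$ (yielding small-$t$ decay $(t/\ell(Q))^{m(N+\gamma-n)}$ from \eqref{size}) and has bounded support (yielding large-$t$ decay in $t^{-mn}$).

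\textbf{Key steps.} By the reverse triangle inequality for the $L^2(dt/t)$-norm,
\[|S(f_1,\ldots,f_m)(x)-c_Q|\leq\bigl\|\Theta_t(f)(x)-\Theta_t(f^\infty)(x_Q)\bigr\|_{L^2(dt/t)}.\]
Expanding the right-hand side via multilinearity and applying Minkowski, the control splits into three types of terms: (i) the all-local piece $S(f_1^0,\ldots,f_m^0)(x)$; (ii) for each mixed $\vec\sigma$ having at least one $\sigma_i=0$ and at least one $\sigma_j=\infty$, the quantity $\|\Theta_t(f^{\vec\sigma})(x)\|_{L^2(dt/t)}$; and (iii) the all-far centered difference $\|\Theta_t(f^\infty)(x)-\Theta_t(f^\infty)(x_Q)\|_{L^2(dt/t)}$. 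For (i), the hypothesized boundedness of $S$, H\"older's inequality on $Q$, and the relation \eqref{Holder} give
\[\frac{1}{|Q|}\int_Q S(f_1^0,\ldots,f_m^0)(x)\,dx\leq|Q|^{-1/p}\|S(f^0)\|_{L^p}\less|Q|^{-1/p}\prod_{i=1}^m|Q^*|^{1/p_i}\|f_i\|_{L^\infty}\sim\prod_{i=1}^m\|f_i\|_{L^\infty}.\]
For (ii) and (iii) I aim for a pointwise-in-$x$ bound $\less\prod\|f_i\|_{L^\infty}$ on $Q$ by splitting the $t$-integral at $\ell(Q)$: in the regime $t<\ell(Q)$, each far factor ($y_i\in(Q^*)^c$) yields decay $(t/\ell(Q))^{N+\gamma-n}$ in \eqref{size}, while in the regime $t>\ell(Q)$, each local factor ($y_i\in Q^*$) yields decay $(\ell(Q)/t)^n$ because the kernel is of order $t^{-n}$ there. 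For (iii) specifically, when $t>\ell(Q)$ the $x$-regularity \eqref{regx} together with $|x-x_Q|\lesssim\ell(Q)<t$ produces decay $(t^{-1}\ell(Q))^\gamma$, while for $t<\ell(Q)$ both $\Theta_t(f^\infty)(x)$ and $\Theta_t(f^\infty)(x_Q)$ already satisfy the small-$t$ decay from the analysis in (ii) since $x,x_Q\notin\operatorname{supp}(f_i^\infty)$. All these bounds are integrable against $dt/t$, giving the desired pointwise estimate; averaging over $Q$ then produces the BMO bound.

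\textbf{Main obstacle.} The delicate point is choosing $c_Q$ so that the piece of $S(f)(x_Q)$ which is not integrable uniformly in the support of $f$ (namely, the contribution of the all-far term on the range $t>\ell(Q)$, where the kernel size estimate alone provides no $t$-decay) is absorbed into the centering rather than having to be bounded directly. The secondary technical difficulty, specific to the multilinear setting, is the bookkeeping for the $2^m-2$ mixed configurations $\vec\sigma$: for each one must verify that the \emph{combination} of far-factor decay (for $t<\ell(Q)$) and local-factor decay (for $t>\ell(Q)$) yields integrability against $dt/t$. Fortunately each mixed $\vec\sigma$ has at least one factor of each type by definition, so both estimates are available in their respective $t$-regimes.
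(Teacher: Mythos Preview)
Your proposal is correct and follows essentially the same route as the paper: a local/far decomposition of each $f_i$ around the fixed cube, the $L^p$ hypothesis for the all-local piece, and kernel size/regularity estimates split at $t\sim\ell(Q)$ for the remaining pieces. The only noteworthy difference is the centering constant: the paper takes $c_B=\bigl\|\Theta_t(f)(x_B)-\Theta_t(f_1\chi_{2B},\ldots,f_m\chi_{2B})(x_B)\bigr\|_{L^2(dt/t)}$, i.e.\ the $L^2(dt/t)$-norm of the sum of \emph{all} non-local configurations at the center, and then handles every $\vec F\in\Lambda$ (mixed and all-far alike) via the centered difference $\Theta_t(f^{\vec F})(x)-\Theta_t(f^{\vec F})(x_B)$, using the $x$-regularity \eqref{regx} in the regime $t>R$. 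Your choice $c_Q=\|\Theta_t(f^\infty)(x_Q)\|_{L^2(dt/t)}$ centers only the all-far piece, so you must bound the mixed terms $\|\Theta_t(f^{\vec\sigma})(x)\|_{L^2(dt/t)}$ outright; your observation that a local factor contributes $(\ell(Q)/t)^n$ for $t>\ell(Q)$ is exactly what makes this work and is a mild refinement over the paper's uniform treatment. Either choice is fine and the arguments are otherwise identical.
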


This is essentially a square function version of a corresponding result for multilinear Calder\'on-Zygmund operators from Grafakos-Torres \cite{GT1}:  If a multilinear Calder\'on-Zygmund operator $T$ is bounded from $L^{p_1}\times\cdots\times L^{p_m}$ into $L^{p}$ for some $1<p,p_1,...,p_m<\infty$, then $T$ is bounded from $L^\infty_c\times\cdots\times L^\infty_c$ into $BMO$.  In \cite{GT1}, the authors prove this using an inductive argument by reducing the $m$ linear case to the $m-1$ linear one.  Here we present a direct multilinear proof adapted from the classical linear version due to Spanne \cite{Sp}, Peetre \cite{P} and Stein \cite{S2}, but prior to this proof we briefly discuss why we don't conclude here that $S$ is bounded from $L^\infty\times\cdots\times L^\infty$ into $BMO$.

In \cite{GT1}, the authors also conclude that if an $m$-linear Calder\'on-Zygmund operator $T$ is bounded, then $T$ is bounded from $L^\infty\times\cdots\times L^\infty$ into $BMO$ estimate.  One difficultly in this problem is that $T$ is not necessarily even defined for $f_1,...,f_m\in L^\infty$.  So one must define $T$ for $f_1,...,f_m\in L^\infty$, and the definition for such functions must be consistent with the given definition of $T$ in the case that $f_i\in L^{p_i}\cap L^\infty$.  As it turns out (see \cite{GT1}), it is reasonable to define for $f_1,...,f_m\in L^\infty$
\begin{align*}
T(f_1,...,f_m)&=\lim_{R\rightarrow\infty}T(f_1\chi_{B(0,R)},...,f_m\chi_{B(0,R)})\\
&\hspace{3cm}-\int_{|y_i|>1}K(0,y_1,...,y_m)\prod_{i=1}^mf_i(y_i)\chi_{B(0,R)}(y_i)dy_i
\end{align*}
where the limit is taken in the dual of $C_{c,0}^\infty(\R^n)$.  Here $C_{c,0}^\infty(\R^n)$ is the collection of all smooth compactly supported functions with mean zero.  As expected, this well defines $T$ on $L^\infty\times\cdots\times L^\infty$ modulo a constant, which is permissible as an element of $BMO$.  The existence of this limit follows from the linearity and kernel estimates of $T$.  Along with the $L^\infty_c\times\cdots\times L^\infty_c\rightarrow BMO$ estimate for $T$, the existence of this limit implies that $T$ is bounded from $L^\infty\times\cdots\times L^\infty$ into $BMO$.

Morally we expect the same estimates for the square function $S$ defined in \eqref{sqfunction} as we have been proved for a multilinear Calder\'on-Zygmund operator $T$.  Despite the estimate for $S$ on $L^\infty_c\times\cdots\times L^\infty_c$, we are unable to make the same boundedness conclusion on $L^\infty\times\cdots\times L^\infty$ for $S$ as can be made for $T$.  The reason for this essentially comes down to the fact that $S$ is not a linear operator.  If one tries to mimic the proof from \cite{GT1} replacing $T$ with $S$, the above limit does not necessarily exist.  So the problem becomes finding a suitable definition for $S$ on $L^\infty\times\cdots\times L^\infty$, as the classical definition does not necessarily exist (at least using the same proof techniques).  Another approach to define $S$ on $L^\infty\times\cdots\times L^\infty$ is to view $\Theta_t$ as an $m$-linear operator taking values in $L^2(\R_+,\frac{dt}{t})$.  In this case one may be able
 to define $S$ as a weak limit of an appropriate space of smooth functions taking values in $L^2(\R_+,\frac{dt}{t})$.  Since we only need the previous estimate for compactly supported functions to prove our interpolation theorem, we will not pursue this approach here.

\begin{proof}
Assume that $f_i\in L_c^\infty$ for $i=1,...,m$ and $B=B(x_B,R)\subset\R^n$ is a ball for some $R>0$ and $x_B\in\R^n$.  Define
\begin{align*}
c_B=\(\int_0^\infty|\Theta_t(f_1,...,f_m)(x_B)-\Theta_t(f_1\chi_{2B},...,f_m\chi_{2B})(x_B)|^2\frac{dt}{t}\)^\frac{1}{2},
\end{align*}
which exists since $f_1,...,f_m\in L^p$ for all $1\leq p\leq\infty$ since we have assumed that $f_1,...,f_m$ are compactly supported.  Then it follows that
\begin{align*}
\int_B\left|S(f_1,...,f_m)(x)-c_B\right|dx&\leq\int_BS(f_1\chi_{2B},...,f_m\chi_{2B})(x)|dx\\
&\hspace{-3.5cm}+\sum_{\vec F\in\Lambda}\int_B\(\int_0^R(|\Theta_t(f_1\chi_{F_1},...,f_m\chi_{F_m})(x)|+|\Theta_t(f_1\chi_{F_1},...,f_m\chi_{F_m})(x_B)|)^2\frac{dt}{t}\)^\frac{1}{2}dx\\
&\hspace{-3cm}+\sum_{\vec F\in\Lambda}\int_B\(\int_R^\infty|\Theta_t(f_1\chi_{F_1},...,f_m\chi_{F_m})(x)-\Theta_t(f_1\chi_{F_1},...,f_m\chi_{F_m})(x_B)|^2\frac{dt}{t}\)^\frac{1}{2}dx\\
&=I+II+III
\end{align*}
where
$$\Lambda=\{(F_1,...,F_m):F_i=2B\text{ or }F_i=(2B)^c\}\backslash\{(2B,...,2B)\}.$$
That is $\Lambda$ is the collection of $m$ vectors of sets with with all combinations of components $2B$ and $(2B)^c$ except for $(2B,...,2B)$.  Note that $|\Lambda|=2^m-1$.  We can easily estimate $I$ using that $S$ is bounded from $L^{p_1}\times\cdots\times L^{p_m}$ into $L^p$
\begin{align*}
I\leq|2B|^\frac{1}{p'}||S(f_1\chi_{2B},...,f_m\chi_{2B})||_{L^p}&\less|B|^\frac{1}{p'}\prod_{i=1}^m||f_i\chi_{2B}||_{L^{p_i}}\less|B|\prod_{i=1}^m||f_i||_{L^\infty}.
\end{align*}
Then to bound $II$, take $\vec F\in\Lambda$, $x\in B$, and we first look at the integrand for $x\in B$
\begin{align*}
|\Theta_t(f_1\chi_{F_1},...,f_m\chi_{F_m})(x)|&\less\int t^{-mn}\prod_{i=1}^m\frac{f_i(y_i)\chi_{F_i}(y_i)}{(1+t^{-1}|x-y_i|)^{N+\gamma}}dy_i\\
&\hspace{-3cm}\leq \prod_{j=1}^m||f_j||_{L^\infty} \(\prod_{i:F_i=2B} \int\frac{1}{(1+|x-y_i|)^{N+\gamma}}dy_i\)\(\prod_{i:F_i=(2B)^c}\int_{|y_i|>R}\frac{2^{N+\gamma}\,t^{N+\gamma-n}}{|y_i|^{N+\gamma}}dy_i\)
\end{align*}
\begin{align*}
&\hspace{-4.5cm}\less \prod_{j=1}^m||f_j||_{L^\infty}\(\prod_{i:F_i=(2B)^c}\frac{t^{N+\gamma-n}}{R^{N+\gamma-n}}\)\\
&\hspace{-4.5cm}\less t^{k_0(N+\gamma-n)}R^{-k_0(N+\gamma-n)} \prod_{j=1}^m||f_j||_{L^\infty}
\end{align*}
where $k_0\in\N$ is the number of terms in $\vec F$ such that $F_i=(2B)^c$.  It is important here that $k_0\geq1$.  Now recall that $|\Lambda|=2^m-1$, and it is now trivial to bound $I$,
\begin{align*}
II&\less \prod_{j=1}^m||f_j||_{L^\infty}\int_B\(\int_0^R (t^{k_0(N+\gamma-n)}R^{-k_0(N+\gamma-n)})^2 \frac{dt}{t}\)^\frac{1}{2}dx\less |B|\prod_{j=1}^m||f_j||_{L^\infty}.
\end{align*}
To bound $III$, for a fixed $\vec F\in\Lambda$ and $x\in B$, we look at the integrand
\begin{align*}
&|\Theta_t(f_1\chi_{F_1},...,f_m\chi_{F_m})(x)-\Theta_t(f_1\chi_{F_1},...,f_m\chi_{F_m})(x_B)|\\
&\hspace{3.5cm}\less\int t^{-mn}(t^{-1}|x-x_B|)^\gamma\prod_{i=1}^m\frac{f_i(y_i)\chi_{F_i}(y_i)}{(1+t^{-1}|x-y_i|)^{N+\gamma}}dy_i\\
&\hspace{3.5cm}\less t^{-\gamma}R^\gamma\prod_{i=1}^m||f_i||_{L^\infty}\int\frac{t^{-n}}{(1+t^{-1}|x-y_i|)^{N+\gamma}}dy_i\\
&\hspace{3.5cm}\less t^{-\gamma}R^\gamma\prod_{i=1}^m||f_i||_{L^\infty}.
\end{align*}
Then once more using that $|\Lambda|=2^{m}-1$, we can bound $III$
\begin{align*}
III&\less|B|\prod_{j=1}^m||f_j||_{L^\infty}\(\int_R^\infty (t^{-\gamma}R^{\gamma})^2\frac{dt}{t}\)^\frac{1}{2}\less|B|\prod_{j=1}^m||f_j||_{L^\infty}.
\end{align*}
Then for $f_i\in L^\infty_c$, $i=1,...,m$, \eqref{BMO} holds with constant independent of $f_1,...,f_m$.
\end{proof}

\begin{corollary}\label{c:interpolation}
If $\theta_t$ satisfies \eqref{size}-\eqref{regx} and \eqref{sqbound} holds for $p=2$ and $1<p_1,...,p_m<\infty$, then \eqref{sqbound} holds for all $2\leq p<\infty$ and $1<p_1,...,p_m<\infty$.
\end{corollary}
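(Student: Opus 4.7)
The plan is to deduce the corollary from multilinear complex interpolation between the hypothesized $L^2$ bound and the $BMO$ endpoint supplied by Theorem~\ref{t:BMO}. Fix target exponents $q\geq 2$ and $q_1,\ldots,q_m>1$ with $\sum 1/q_i=1/q$, and set
\[
\theta=1-\tfrac{2}{q}\in[0,1),\qquad p_i^0=\tfrac{2q_i}{q}.
\]
The constraint $\sum 1/q_j=1/q$ forces $q_i\geq q\geq 2$, so $p_i^0\in[2,\infty)\subset(1,\infty)$; moreover $\sum 1/p_i^0=(q/2)\sum 1/q_i=1/2$, $p_i^0/(1-\theta)=q_i$, and $2/(1-\theta)=q$. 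The hypothesis therefore gives $S:L^{p_1^0}\times\cdots\times L^{p_m^0}\to L^2$, while Theorem~\ref{t:BMO} gives the endpoint $S:L_c^\infty\times\cdots\times L_c^\infty\to BMO$.

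To carry out the interpolation cleanly, I would recast the problem in vector-valued form by regarding $\Theta_t$ as a single $m$-linear operator $\Theta$ with values in the Hilbert space $H=L^2(\R_+,dt/t)$, so that $\|S(f_1,\ldots,f_m)\|_{L^p}=\|\Theta(f_1,\ldots,f_m)\|_{L^p(\R^n;H)}$. The two endpoint estimates then read
\begin{align*}
\Theta &: L^{p_1^0}\times\cdots\times L^{p_m^0}\longrightarrow L^2(\R^n;H),\\
\Theta &: L_c^\infty\times\cdots\times L_c^\infty\longrightarrow BMO(\R^n;H).
\end{align*}
Applying the $H$-valued Fefferman--Stein identity $[L^p(H),BMO(H)]_\theta=L^{p/(1-\theta)}(H)$ together with the multilinear extension of Calder\'on's complex interpolation method at level $\theta$, one obtains $\Theta:L^{q_1}\times\cdots\times L^{q_m}\to L^q(\R^n;H)$, which is precisely \eqref{sqbound}.

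The principal obstacle is justifying multilinear complex interpolation when one output endpoint is $BMO$ rather than a genuine Banach space and the input endpoint $L_c^\infty$ is only a dense subspace of $L^\infty$. I would handle this via the Stein--Weiss analytic-family framework applied to simple (or compactly supported bounded) test functions, on which both endpoint estimates hold with constants independent of supports; complex interpolation then gives the intermediate bound on this test class, and the hypotheses $1<q_i<\infty$ allow a standard density argument to extend the estimate to arbitrary $f_i\in L^{q_i}$. The Hilbert nature of $H$ ensures that the vector-valued interpolation identities used above are classical and cause no extra difficulty.
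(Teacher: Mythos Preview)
Your approach via complex interpolation is genuinely different from the paper's. The paper instead composes $S$ with the Fefferman--Stein sharp maximal function $M^\#$: from the $L^2$ hypothesis one gets $\|M^\# S(f_1,\ldots,f_m)\|_{L^2}\less\|M S(f_1,\ldots,f_m)\|_{L^2}\less\prod_i\|f_i\|_{L^{p_i}}$, while Theorem~\ref{t:BMO} gives $\|M^\# S(f_1,\ldots,f_m)\|_{L^\infty}=\|S(f_1,\ldots,f_m)\|_{BMO}\less\prod_i\|f_i\|_{L^\infty}$. Multilinear Marcinkiewicz (real) interpolation applied to the sublinear map $(f_1,\ldots,f_m)\mapsto M^\# S(f_1,\ldots,f_m)$, followed by the Fefferman--Stein inequality $\|g\|_{L^p}\less\|M^\# g\|_{L^p}$, then yields \eqref{sqbound} for $2\le p<\infty$. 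This sidesteps all vector-valued machinery and the delicate question of complex interpolation with a $BMO$ target: the $M^\#$ trick converts the $BMO$ endpoint into a genuine $L^\infty$ endpoint, after which only standard real interpolation is needed.

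There is also a concrete gap in your argument as written. Theorem~\ref{t:BMO} as stated gives only that the \emph{scalar} function $x\mapsto\|\Theta(f_1,\ldots,f_m)(x)\|_H$ lies in $BMO$. You invoke instead the stronger assertion $\Theta(f_1,\ldots,f_m)\in BMO(\R^n;H)$; in general $\|F\|_H\in BMO$ does not imply $F\in BMO(H)$, since the reverse triangle inequality goes only one way. It is true that the \emph{proof} of Theorem~\ref{t:BMO} essentially establishes the $H$-valued estimate (the constant $c_B$ there arises as the $H$-norm of an $H$-valued quantity, and the same computation controls $\frac{1}{|B|}\int_B\|\Theta(f)(x)-c_B^H\|_H\,dx$ for an appropriate $c_B^H\in H$), but you would need to say so explicitly rather than cite the theorem. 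Together with the acknowledged but unresolved issues surrounding multilinear complex interpolation into $BMO(H)$ and the $L_c^\infty$ endpoint, your route is plausible but substantially heavier than the paper's.
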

\begin{proof}
Define the sharp maximal function
\begin{align*}
M^\#f(x)=\sup_{Q\ni x}\frac{1}{|Q|}\int_Q|f(y)-f_Q|dy.
\end{align*}
By definition we have that $||f||_{BMO}=||M^\#f||_{L^\infty}$.  Also it is easy to see that $||M^\#f||_{L^p}\less||Mf||_{L^p}$, where $M$ is the Hardy-Littlewood maximal operator.  Then using the $L^2$ bound of $M$ and the hypothesis on $S$, it follows that for all $f_1,...,f_m\in L^\infty_c$
\begin{align*}
&||M^\#S(f_1,...,f_m)||_{L^2}\less||MS(f_1,...,f_m)||_{L^2}\less\prod_{i=1}^m||f_i||_{L^{p_i}}
\end{align*}
and by assumption by theorem \ref{t:BMO}
\begin{align*}
&||M^\#S(f_1,...,f_m)||_{L^\infty}=||S(f_1,...,f_m)||_{BMO}\less\prod_{i=1}^m||f_i||_{L^\infty}.
\end{align*}
Then by multilinear Marcinkiewicz interpolation, it follows that
\begin{align*}
||M^\#S||_{L^p}\less\prod_{i=1}^m||f_i||_{L^{p_i}}
\end{align*}
for all $f_i\in L^\infty_c$ where $2\leq p<\infty$ and $1<p_1,...,p_m<\infty$ satisfying \eqref{Holder} with constant independent of $f_1,...,f_m$.  Since $L^\infty_c$ is dense in $L^q$ for all $1\leq q<\infty$, it follows that $M^\#S$ is bounded from $L^{p_1}\times\cdots\times L^{p_m}$ into $L^p$ for all $2\leq p<\infty$ and $1<p_1,...,p_m<\infty$.  We have also from a result of Fefferman-Stein \cite{FS2} that $||f||_{L^q}\less||M^\#f||_{L^q}$ when $1\leq q<\infty$ and $f$ satisfies $M^df\in L^q$ where $M^d$ is the dyadic maximal function (in particular when $f\in L^q$ for $1<q<\infty$).  Therefore
\begin{align*}
||S(f_1,...,f_m)||_{L^p}\less||M^\#S(f_1,...,f_m)||_{L^p}\less\prod_{i=1}^m||f_i||_{L^{p_i}},
\end{align*}
which completes the proof.
\end{proof}

\section{Proof of the Theorem \ref{t(1,1)}}

The way we will prove theorem \ref{t(1,1)} is to first assume that $T$ satisfies
\begin{align}
T^{*1}(1,...,1)=\cdots=T^{*m}(1,...,1)=0\label{reduced}
\end{align}
in place of \eqref{Tstarcancel}, and prove that $T$ is bounded.  Then we proceed by using a multilinear version of the T1 paraproduct used in the original T1 theorem by David-Journ\'e \cite{DJ}.  The bilinear version of this paraproduct was constructed in \cite{Hart2}.
\begin{lemma}\label{l:paraproduct}
Given $\beta\in BMO$, there exists a multilinear Calder\'on-Zygmund operator $L$ bounded from $L^{p_1}\times\cdots\times L^{p_m}$ into $L^p$ for all $1<p_i<\infty$ satisfying \eqref{Holder} such that
\begin{align}
L(1,...,1)=\beta\text{ and }L^{*i}(1,...,1)=0\text{ for }i=1,...,m.\label{L(1)}
\end{align}
\end{lemma}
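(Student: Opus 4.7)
The plan is to construct $L$ explicitly as a multilinear generalization of the David--Journ\'e paraproduct adapted to $\beta$, extending the bilinear construction of \cite{Hart2} to arbitrary $m$. Fix $\varphi,\psi\in C_0^\infty(\R^n)$ with $\widehat\varphi(0)=1$ and $\widehat\psi(0)=0$, chosen so that the Calder\'on reproducing formula $\int_0^\infty Q_tQ_tf\,\frac{dt}{t}=f$ holds in the distributional sense (this is standard after, if necessary, replacing $\psi$ by a second generation kernel built from $\psi$ itself), where $P_tf=\varphi_t*f$ and $Q_tf=\psi_t*f$. Define
\begin{align*}
L(f_1,\dots,f_m)(x) := \int_0^\infty Q_t\bigl[(Q_t\beta)\,P_tf_1\cdots P_tf_m\bigr](x)\,\frac{dt}{t}.
\end{align*}

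The two cancellation conditions in \eqref{L(1)} are then essentially built in. Taking all $f_i\equiv 1$ gives $P_t1\equiv 1$, hence $L(1,\dots,1)=\int_0^\infty Q_tQ_t\beta\,\frac{dt}{t}=\beta$ by the reproducing formula. For the adjoint conditions, the duality identity $\langle L^{*i}(1,\dots,1),h\rangle=\langle L(1,\dots,h,\dots,1),1\rangle$ reduces the claim to $\int L(1,\dots,h,\dots,1)(x)\,dx=0$, which follows from $\int Q_tG\,dx=0$ (equivalent to $\widehat\psi(0)=0$) applied to $G=(Q_t\beta)P_th$ after Fubini.

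Next I would verify that the kernel
\begin{align*}
K(x,y_1,\dots,y_m) = \int_0^\infty\!\!\int_{\R^n}\psi_t(x-z)(Q_t\beta)(z)\prod_{i=1}^m\varphi_t(z-y_i)\,dz\,\frac{dt}{t}
\end{align*}
satisfies the standard multilinear Calder\'on--Zygmund size and H\"older estimates. The essential input is the pointwise bound $|Q_t\beta(z)|\lesssim\|\beta\|_{BMO}$, a standard BMO--Poincar\'e argument exploiting the compact support and mean-zero property of $\psi$. Combined with the compact support of $\varphi_t,\psi_t$, the kernel estimates then reduce to elementary $t$-integrals of bump functions.

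The main obstacle is proving the $L^{p_1}\times\cdots\times L^{p_m}\to L^p$ boundedness. My strategy is to obtain a single off-diagonal bound and then invoke the multilinear Calder\'on--Zygmund theory of Grafakos--Torres \cite{GT2}. The key input is the Fefferman--Stein fact that $d\mu(x,t):=|Q_t\beta(x)|^2\,\frac{dx\,dt}{t}$ is a Carleson measure with $\|d\mu\|_{\mathcal C}\lesssim\|\beta\|_{BMO}^2$. Dualizing against $g\in L^2$ and applying the $L^2$ Littlewood--Paley inequality to $Q_tg$, followed by Cauchy--Schwarz in $t$, yields
\begin{align*}
\|L(f_1,\dots,f_m)\|_{L^2}\lesssim\Bigl\|\Bigl(\int_0^\infty|Q_t\beta|^2\prod_{i=1}^m|P_tf_i|^2\,\tfrac{dt}{t}\Bigr)^{1/2}\Bigr\|_{L^2};
\end{align*}
dominating $m-1$ of the factors $|P_tf_i|$ pointwise by $Mf_i$ and applying Carleson embedding (with respect to $d\mu$) to the remaining factor then produces the bound $\|L(f_1,\dots,f_m)\|_{L^2}\lesssim\prod_i\|f_i\|_{L^{p_i}}$ for any H\"older tuple with $\sum 1/p_i=1/2$. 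Combined with the kernel estimates from the previous step, the full range $1<p_i<\infty$ satisfying \eqref{Holder} then follows from \cite{GT2}.
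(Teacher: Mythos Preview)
Your construction and overall strategy match the paper's: the same paraproduct (the paper uses $Q_t^2\beta$ inside with a three-fold reproducing formula $\int_0^\infty Q_t^3\,\frac{dt}{t}=\text{Id}$, a cosmetic variant of your $Q_t\beta$ with $\int_0^\infty Q_t^2\,\frac{dt}{t}=\text{Id}$), the same Carleson-measure route to a single $L^{p_1}\times\cdots\times L^{p_m}\to L^2$ bound, the same kernel check by integrating the $\theta_t$-type estimates in $t$, and the same appeal to \cite{GT2} for the full range.

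The gap is in your verification of \eqref{L(1)}, which is where the paper spends most of its effort. The quantities $L(1,\dots,1)$ and $L^{*i}(1,\dots,1)$ are \emph{defined} for a Calder\'on--Zygmund operator via the limit $\lim_{R\to\infty}\langle L(\eta_R,\dots,\eta_R),\phi\rangle$ with cutoffs $\eta_R$ and mean-zero test functions $\phi$; the formal identity $\langle L^{*i}(1,\dots,1),h\rangle=\langle L(1,\dots,h,\dots,1),1\rangle$ you invoke has no a priori meaning, and ``$\int Q_tG\,dx=0$ after Fubini'' does not address this. Unwinding the actual limit for $L^{*i}$ yields $\int_0^\infty\int(Q_t\beta)(P_t\eta_R)^{m-1}(P_th)(Q_t\eta_R)\,dx\,\frac{dt}{t}$, and one must show this tends to zero: the paper splits at $t=R/4$, observes that $Q_t\eta_R(x)=0$ for $t\le R/4$ on $\supp(P_th)$ since $\eta_R\equiv 1$ there, and bounds the tail $t>R/4$ by $O(R^{-1})$ using $\|Q_t^2\beta\|_{L^\infty}\less\|\beta\|_{BMO}$. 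The computation $L(1,\dots,1)=\beta$ needs the same splitting, plus the fact that Calder\'on's reproducing formula converges in $H^1$ so that the pairing with $\beta\in BMO$ is legitimate. A smaller issue: in your boundedness step, after pulling out $m-1$ maximal functions you need a mixed-norm estimate $\|(\int_0^\infty|P_tf_1|^2\,d\mu_x)^{1/2}\|_{L^{p_1}}\less\|f_1\|_{L^{p_1}}$ with $p_1>2$, which is not the standard Carleson embedding. The paper (Proposition~\ref{p:Carlesonbound}) instead applies H\"older directly in $\R^{n+1}_+$ with exponents $p_i/2$ to $\int_{\R^{n+1}_+}\prod_i|P_tf_i|^2\,d\mu$, reducing cleanly to the standard inequality $\int|P_tf_i|^{p_i}\,d\mu\less\|f_i\|_{L^{p_i}}^{p_i}$ for each factor separately.
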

We will give a proof of this lemma at the end of this section.  Now we prove the theorem \ref{t(1,1)} assuming lemma \ref{l:paraproduct}.
\begin{proof}
Denote by $P_t$ be a smooth approximation to identity operators with smooth compactly supported kernels that satisfy
$$
    f=\lim_{t\rightarrow0}P_tf\;\;\;\;\text{ and }\;\;\;\;0=\lim_{t\rightarrow\infty}P_tf
$$
in $\S$ for $f\in\S_0$.  There exist Littlewood-Paley-Stein projection operators $Q_t^{(i)}$ for $i=1,2$ with smooth compactly supported kernels such that $t\frac{d}{dt}P_t^2=Q_t^{(2)}Q_t^{(1)}$.  Using these operators, we decompose $T$ for $f_i\in\S_0$, $i=0,...,m$
\begin{align*}
|\<T(f_1,...,f_m),f_0\>|&=\left|\int_0^\infty t\frac{dt}{t}\<T(P_t^2f_1,...,P_t^2f_m),P_t^2f_0\>\frac{dt}{t}\right|\\
&\hspace{-2.5cm}\leq\sum_{i=0}^m\int_0^\infty\left|\<\Theta_t^{(i)}(f_1,...,f_{i-1},f_0,f_{i+1},...f_m),Q_t^{(1)}f_i\>\right|\frac{dt}{t}\\
&\hspace{-2.5cm}\leq\sum_{i=0}^m\left|\left|\(\int_0^\infty|\Theta_t^{(i)}(f_1,...,f_{i-1},f_0,f_{i+1},...f_m)|^2\frac{dt}{t}\)^\frac{1}{2}\right|\right|_{L^{p_i'}}\left|\left|\(\int_0^\infty|Q_t^{(1)}f_i|^2\frac{dt}{t}\)^\frac{1}{2}\right|\right|_{L^{p_i}}
\end{align*}
where we define $p_0=p'$ and
\begin{align*}
\Theta_t^{(i)}(f_1,...,f_m)=Q_t^{(2)\,*}T^{*i}(P_t^2f_1,...,P_t^2f_m)
\end{align*}
and $T^{*i}$ is the $i^{th}$ formal transpose of $T$ defined by the pairing for $f_0,...,f_m\in\S$
$$\<T^{*i}(f_1,...,f_m),f_0\>=\<T(f_1,...,f_{i-1},f_0,f_{i+1},...,f_m),f_i\>.$$
This type of decomposition was originally done by Coifman-Meyer in \cite{CM}, and then in the bilinear setting in \cite{Hart2}  Since $1<p_1,...,p_m<\infty$, the second term in above can be bounded by $||f_i||_{L^{p_i}}$ using a Littlewood-Paley-Stein estimate for $Q_t^{(1)}$.  We have also assume that $T\in WBP$ which we define now
\begin{definition}
For $M\in\N$, a function $\phi\in C_0^\infty(\R^n)$ is a normalized bump of order $M$ if $\supp(\phi)\subset B(0,1)$ and for all multi-indices $\alpha\in\N_0^n$ with $|\alpha|\leq M$,
\begin{align*}
||\partial^\alpha\phi||_{L^\infty}\leq1.
\end{align*}
An $m$-linear operator $T:\S\times\cdots\times\S^m\rightarrow\S'$ satisfies the weak boundedness property, written $T\in WBP$, if there exists $M\in\N$ such that for all normalized bumps $\phi_0,...,\phi_m\in C_0^\infty$ of order $M$
\begin{align*}
\left|\<T(\phi_1^{x,R},...,\phi_m^{x,R}),\phi_0^{x,R}\>\right|\less R^n
\end{align*}
where $\phi^{x,R}(y)=\phi\(\frac{y-x}{R}\)$.
\end{definition}
It follows that $\theta_t^{(i)}$ satisfy \eqref{size}-\eqref{regx} for $i=0,1,...,m$ when $|x-y|\less t$ since $T\in WBP$ and for $|x-y|\more t$ using the kernel representation of $T$ (for details see \cite{Hart2}).  It follows from Theorem \ref{t:main} and \eqref{Tcancel} that \eqref{Lpbound} holds for all $2\leq p<\infty$ and $1<p_i<\infty$ where $S$ is the square function associated to $\Theta_t^{(0)}$ defined by \eqref{sqfunction}.  Also it follows from \cite{Hart1} or \cite{GLMY} that \eqref{Lpbound} holds for all $1<p,p_i<\infty$ where $S$ is the square function associated to $\Theta_t^{(i)}$ defined by \eqref{sqfunction} for $i=1,...,m$.  Now fix $2\leq p<\infty$ and $1<p_i<\infty$ such that \eqref{Holder} holds.  For example take $p_i=2m$ and $p=2$.  Then $p_i'=\frac{2m}{2m-1}>1$ for $i=1,...,m
 $.  Using this choice of indices, it follows from \eqref{sqbound} that $T$ is bounded from $L^{2m}\times\cdots\times L^{2m}$ into $L^2$, and hence is bounded from $L^{p_1}\times\cdots\times L^{p_m}$ into $L^p$ for all $1<p_1,...,p_m<\infty$ such that \eqref{Holder} holds (see for example \cite{GT1}).  Here we have used that
\begin{align*}
\left|\left|\(\int_0^\infty|\Theta_t^{(0)}(f_1,...,f_m)|^2\frac{dt}{t}\)^\frac{1}{2}\right|\right|_{L^p}\less\prod_{i=1}^m||f_i||_{L^{p_i}}
\end{align*}
and that for $j=1,...,m$
\begin{align*}
\left|\left|\(\int_0^\infty|\Theta_t^{(j)}(f_1,...,f_{j-1},f_0,f_{j+1},...,f_m)|^2\frac{dt}{t}\)^\frac{1}{2}\right|\right|_{L^{p_i'}}\less||f_0||_{L^{p'}}\prod_{i\neq j}||f_i||_{L^{p_i}}.
\end{align*}
This proves the reduces case of theorem \ref{t(1,1)} where we assumed \eqref{reduced} in place of \eqref{Tstarcancel}.  Now assuming that lemma \ref{l:paraproduct} holds, we prove the full theorem \ref{t(1,1)} where $T$ satisfies \eqref{Tstarcancel}.  Given $T$ satisfying the hypotheses of theorem \ref{t(1,1)}, by lemma \ref{l:paraproduct} there exist operators bounded $m$-linear Clader\'on-Zygmund operators $L_1,...,L_m$ such that
\begin{align*}
L_i^{*i}(1,...,1)=T^{*i}(1,...,1)\text{ and }L_i^{*j}(1,...,1)=0\text{ for }i\neq j.
\end{align*}
Define
\begin{align*}
\widetilde T(f_1,...,f_m)=T(f_1,...,f_m)-\sum_{i=1}^mL_i(f_1,...,f_m).
\end{align*}
Then $\widetilde T$ satisfies for $i=1,...,m$
\begin{align*}
\widetilde T^{*i}(1,...,1)&=T^{*i}(1,...,1)-\sum_{i=1}^mL_i^{*i}(1,...,m)=0.
\end{align*}
Now for any dyadic cube $Q\subset\R^n$ we bound $\widetilde T$ as in \eqref{Tcancel}
\begin{align*}
\int_Q\(\int_0^{\ell(Q)}|Q_t\widetilde T(P_tb_Q^1,...,P_tb_Q^m)|^2\frac{dt}{t}\)^\frac{1}{2}dx&\leq\sum_{i=1}^m\int_Q\(\int_0^{\ell(Q)}|Q_tL_i^{*i}(P_tb_Q^1,...,P_tb_Q^m)|^2\frac{dt}{t}\)^\frac{1}{2}dx\\
&\hspace{.75cm}+\int_Q\(\int_0^{\ell(Q)}|Q_tT(P_tb_Q^1,...,P_tb_Q^m)|^2\frac{dt}{t}\)^\frac{1}{2}dx.
\end{align*}
The second term is bounded by $|Q|$ by hypothesis.  If we prove that the square function associate to each term $Q_tL_i^{*i}(P_tf_1,...,P_tf_m)$ is bounded from $L^{q_1}\times\cdots\times L^{q_m}$ into $L^q$, then we bound the first term as well and we can apply the reduced version to complete the proof.  So we have reduced the proof to showing that \eqref{Lpbound} holds for $2\leq p<\infty$ and $1<p_1,...,p_m<\infty$ for $\Theta_t(f_1,...,f_m)=Q_tL_i^{*i}(P_tf_1,...,P_tf_m)$ with its associated kernel $\theta_t(x,y_1,...,y_m)$ and square function $S$ as in \eqref{sqfunction}.  Since $L_i$ is bounded, it follows that
\begin{align*}
|\theta_t(x,y_1,...,y_m)|&=|\<L_i(\varphi_t^{y_1},...,\varphi_t^{y_m}),\psi_t^x\>|\less||\psi_t||_{L^2}\prod_{i=1}^m||\varphi_t||_{L^{2m}}\less t^{-mn}.
\end{align*}
Also, if $|x-y_{i_0}|>4t$ it follows that
\begin{align}
|\theta_t(x,y_1,...,y_m)|&=\left|\int\ell(u,v_1,...,v_m)\psi_t(x-u)\prod_{i=1}^m\varphi_t(y_i-v_i)du\,dv\right|\notag\\
&=\left|\int(\ell(u,v_1,...,v_m)-\ell(x,v_1,...,v_m))\psi_t(x-u)\prod_{i=1}^m\varphi_t(y_i-v_i)dv\right|\notag\\
&\less\int\frac{|x-u|^\gamma}{\(\sum_{i=1}^m|x-v_i|\)^{mn+\gamma}}|\psi_t(x-u)|\prod_{i=1}^m|\varphi_t(y_i-v_i)|du\,dv\notag\\
&\less\int_{|v_{i_0}-y_{i_0}|<t}\int_{|x-u|<t}\frac{t^\gamma}{|x-v_{i_0}|^{mn+\gamma}}t^{-(m+1)n}du\,dv\notag\\
&\less\frac{t^{-mn}}{(1+t^{-1}|x-y_{i_0}|)^{mn+\gamma}}.\label{bound}
\end{align}
In this computation we use that $|x-y_{i_0}|>4t$ to replace $|x-v_{i_0}|$ with $|x-y_{i_0}|+t$:  For $v_{i_0}$ such that $|v_{i_0}-y_{i_0}|<t$, we have
\begin{align*}
|x-v_{i_0}|\geq|x-y_{i_0}|-|y_{i_0}-v_{i_0}|>\frac{1}{2}|x-y_{i_0}|+t.
\end{align*}
Since $|\theta_t(x,y_1,...,y_m)|\less t^{-mn}$ as well, it follows that $\theta_t$ satisfies \eqref{bound} for all $x,y_{i_0}\in\R^n$ and $i_0=1,...,m$ (not just for $|x-y_{i_0}|>4t$).  Then it follows that $\theta_t$ satisfies \eqref{size}
\begin{align*}
|\theta_t(x,y_1,...,y_m)|&\less\prod_{i=1}^m\(\frac{t^{-mn}}{(1+t^{-1}|x-y_{i_0}|)^{mn+\gamma}}\)^{1/m}\\
&\less\prod_{i=1}^m\frac{t^{-n}}{(1+t^{-1}|x-y_{i_0}|)^{n+\gamma/m}}.
\end{align*}
It follows as well that $\theta_t$ satisfies \eqref{regy} and \eqref{regx}.  Consider
\begin{align*}
|\theta_t(x,y_1,...,y_m)-\theta_t(x',y_1,...,y_m)|&=\left|\<L_i(\varphi_t^{y_1},...,\varphi_t^{y_m}),\psi_t^x-\psi_t^{x'}\>\right|\\
&\less t^{-mn}(t^{-1}|x-x'|).
\end{align*}
When coupled with the size condition \eqref{size}, this estimate is sufficient for \eqref{regx} if we allows for a possibly smaller regularity parameter $\gamma$.  Then by symmetric arguments for $y_1,...,y_m\in\R^n$, $\theta_t$ satisfies \eqref{size}-\eqref{regx}.  Moreover, since $L_i$ is bounded it follows that $L_i(1,...,1)\in BMO$ and so
\begin{align*}
|\Theta_t(1,...,1)(x)|^2dx\frac{dt}{t}&=|Q_tL_i(1,...,1)|^2dx\frac{dt}{t}
\end{align*}
is a Carleson measure.  Therefore by proposition \ref{p:Carlesonbound} and corollary \ref{c:interpolation}, it follows that \eqref{Lpbound} holds for the square function $S$ associated to $\Theta_t=Q_tL_i(P_t\otimes\cdots\otimes P_t)$ for any $2\leq p<\infty$, $1<p_1,...,p_m<\infty$ satisfying \eqref{Holder} and for each $i=1,...,m$.  Therefore the second term above can be bounded since $q\geq2$
\begin{align*}
\int_Q\(\int_0^\infty|Q_tT(P_tb_Q^1,...,P_tb_Q^m)|^2\frac{dt}{t}\)^\frac{1}{2}dx&\leq|Q|^\frac{q}{q'}\left|\left|\(\int_0^\infty|\Theta_t(b_Q^1,...,b_Q^m)|^2\frac{dt}{t}\)^\frac{1}{2}\right|\right|_{L^q}^q\\
&\less|Q|^\frac{q}{q'}\prod_{i=1}^m||b_Q^1||_{L^{q_i}}^q\\
&\less|Q|.
\end{align*}
Therefore $\widetilde T$ satisfies \eqref{Tcancel} as well and hence is bounded for from $L^{p_1}\times\cdots\times L^{p_m}$ into $L^p$ for all $1<p_1,...,p_m<\infty$ satisfying \eqref{Holder}.  It follows easily that $T$ is bounded on the same spaces since $\widetilde T$ and $L_i$ for each $i=1,...,m$ are.
\end{proof}

Finally we prove the paraproduct construction in lemma \ref{l:paraproduct}.

\begin{proof}
Let $P_t$ be a smooth approximation to the identity with convolution kernel supported in $B(0,1)$.  Also fix $\psi\in C_0^\infty$ radial, real-valued with mean zero such that
\begin{align*}
\int_0^\infty\widehat\psi(te_1)^3\frac{dt}{t}=1
\end{align*}
where $e_1=(1,0,...,0)\in\R^n$, and define $Q_tf=\psi_t*f$.  It follows that
$$\int_0^\infty Q_t^3f\frac{dt}{t}=f$$
in $L^p$ for all $1<p<\infty$ and in $H^1$, where $Q_t^3$ is the composition of $Q_t$ with itself three times.  Now define $L$ with kernel $\ell(x,y_1,...,y_m)$ by the following
\begin{align*}
L(f_1,...,f_m)&=\int_0^\infty L_t(f_1,...,f_m)\frac{dt}{t}=\int_0^\infty Q_t\((Q_t^2\beta)\prod_{i=1}^mP_tf_i\)\frac{dt}{t}\\
\ell(x,y_1,...,y_m)&=\int_0^\infty\ell_t(x,y_1,...,y_m)\frac{dt}{t}=\int_0^\infty\int\psi_t(x-u)Q_t^2\beta(u)\prod_{i=1}^m\varphi_t(u-y_i)du\frac{dt}{t}.
\end{align*}
We start by analyzing $L_t$.  Define the non-negative measure $d\mu$ on $\R^{n+1}_+$ by
\begin{align*}
&d\mu(x,t)=|\widetilde L_t(1,...,1)(x)|^2dx\frac{dt}{t}=|Q_t^2\beta(x)|^2dx\frac{dt}{t}\\
&\text{ where }\;\;\;\widetilde L_t=M_{Q_t^2\beta}\prod_{i=1}^mP_tf_i.
\end{align*}
It follows then that $d\mu(x,t)$ is a Carleson measure.  It is straightforward to show that the kernels of $\widetilde L_t$ satisfy \eqref{size}-\eqref{regx} as well (in fact we can take $N>mn+1$ since $\varphi,\psi\in C_0^\infty$, which we will use later).  The smoothness in $x$ is easy to show since we have that $\widetilde L_t$ is multiplied by 
\begin{align*}
Q_t^2\beta(x)=\int\psi_t(x-u)Q_t\beta(u)du
\end{align*}
and $\psi_t$ is smooth.  So by proposition \ref{p:Carlesonbound} and corollary \ref{c:interpolation}, we have
\begin{align*}
\left|\left|\(\int_0^\infty|\widetilde L_t(f_1,...,f_m)|^2\frac{dt}{t}\)^\frac{1}{2}\right|\right|_{L^p}\less\prod_{i=1}^m||f_i||_{L^{p_i}}
\end{align*}
for all $2\leq p<\infty$ and $1<p_1,...,p_m<\infty$.  Then for any $f_0,f_1,...,f_m\in\S$ with $||f_0||_{L^{p'}}\leq1$
\begin{align*}
|\<L(f_1,...,f_m),f_0\>|&\leq\int_0^\infty\left|\int Q_t^2\beta(x)\prod_{i=1}^mP_tf_i(x)Q_tf_0(x)dx\right|\frac{dt}{t}\\
&\leq\left|\left|\(\int_0^\infty|\widetilde L_t(f_1,...,f_m)|^2\frac{dt}{t}\)^\frac{1}{2}\right|\right|_{L^p}\left|\left|\(\int_0^\infty| Q_tf_0|^2\frac{dt}{t}\)^\frac{1}{2}\right|\right|_{L^{p'}}\\
&\less||f_0||_{L^{p'}}\prod_{i=1}^m||f_i||_{L^{p_i}}.
\end{align*}
Therefore $L$ is bounded for appropriate indices $p,p_1,...,p_m$.  It also follows that $\ell$ is a Calder\'on-Zygmund kernel.  To see this, take $d=\sum_{i=1}^m|x-y_i|$ and use \eqref{size} to compute
\begin{align*}
|\ell(x,y_1,...,y_m)|&\less d^{-(N+\gamma)}\int_0^dt^{N+\gamma-mn}\frac{dt}{t}+\int_d^\infty t^{-mn}\frac{dt}{t}\less d^{-mn}.
\end{align*}
Similarly we have
\begin{align*}
|\ell(x,y,z)-\ell(x',y,z)|&\less|x-x'|^\gamma d^{-(N+\gamma)}\int_0^d t^{N+\gamma-mn}\frac{dt}{t}+|x-x'|^\gamma\int_d^\infty t^{-mn-\gamma}\frac{dt}{t}\\
&\less |x-x'|^\gamma d^{-(mn+\gamma)}
\end{align*}
With symmetric arguments for the regularity in $y_1,...,y_m$, it follows that the kernel $\ell$ is an $m$-linear Calder\'on-Zygmund kernel.  So $L$ is an $m$-linear Calder\'on-Zygmund operator, and is bounded from $L^{p_1}\times\cdots\times L^{p_m}$ into $L^p$ for all $1<p_i<\infty$ when \eqref{Holder} holds.

Now we show \eqref{L(1)}.  Let $\eta\in C_0^\infty$ with $\eta\equiv1$ on $B(0,1)$, $\supp(\eta)\subset B(0,2)$, and $\eta_R(x)=\eta(x/R)$.  Let $\phi\in C_0^\infty$ with mean zero and $N$ such that $\supp(\phi)\subset B(0,N)$.  Then to compute $L(1,...,1)$
\begin{align}
\<L(1,...,1),\phi\>&=\lim_{R\rightarrow\infty}\int_{R/4}^\infty\int Q_t\phi(x)\[P_t\eta_R(x)\]^m Q_t^2\beta(x)dx\frac{dt}{t}\notag\\
&\hspace{2cm}+\lim_{R\rightarrow\infty}\int_0^{R/4}\int Q_t\phi(x)\[P_t\eta_R(x)\]^m Q_t^2\beta(x)dx\frac{dt}{t}.\label{L(1,1)}
\end{align}
We may write this only if the two limits on the right hand side of the equation exist.  As we are taking $R\rightarrow\infty$ and $N$ is a fixed quantity determined by $\phi$, without loss of generality assume that $R>2N$.  Note that for $t\leq R/4$ and $|x|<N+t$,
$$\supp(\varphi_t(x-\cdot))\subset B(x,t)\subset B(0,N+2t)\subset B(0,R).$$
Since $\eta_R\equiv1$ on $B(0,R)$, it follows that $P_t\eta_R(x)=1$ for all $|x|<N+t$ when $t\leq R/4$.  Therefore
$$\lim_{R\rightarrow\infty}\int_{R/4}^\infty\int Q_t\phi(x)\[P_t\eta_R(x)\]^mQ_t^2\beta(x)dx\frac{dt}{t}=\int\int_0^\infty Q_t^3\phi(x)\frac{dt}{t}\,\beta(x)dx=\<\beta,\phi\>,$$
where we have used that Calder\'on's reproducing formula holds in $H^1$.  This fact is due originally due to Folland-Stein \cite{FS} in the discrete setting and by Wilson in \cite{W} in the continuous setting as used here.  For any $t>0$
\begin{align}
&||P_t\eta_R||_{L^1}\less||\varphi_t|||_{L^1}||\eta_R||_{L^1}\less R^n,\label{estimate1}\\
&||P_t\eta_R||_{L^\infty}\leq||\varphi_t||_{L^1}||\eta_R||_{L^\infty}=1,\label{estimate2}
\end{align}
and for any $x\in\R^n$
\begin{align}
|Q_t\phi(x)|=\left|\int(\psi_t(x-y)-\psi_t(x))\phi(y)dy\right|\less\int t^{-n}(t^{-1}|y|)|\phi(y)|dy \less t^{-(n+1)}.\label{estimate3}
\end{align}
Therefore
\begin{align}
\int_{R/4}^\infty\int|Q_t\phi(x)\[P_t\eta_R(x)\]^m Q_t^2\beta(x)|dx\frac{dt}{t}&\notag\\
&\hspace{-2.5cm}\leq\int_{R/4}^\infty||P_t\eta_R||_{L^1}||P_t\eta_R||_{L^\infty}^{m-1}||Q_t^2\beta||_{L^\infty}||Q_t\phi||_{L^\infty}\frac{dt}{t}\notag\\
&\hspace{-2.5cm}\less R^n\int_{R/4}^\infty t^{-(n+1)}\frac{dt}{t}\less R^{-1}.\label{secondterm}
\end{align}
Hence the second limit in \eqref{L(1,1)} exists and tends to $0$ as $R\rightarrow\infty$.  Then $\<L(1,...,1),\phi\>=\<\beta,\phi\>$ for all $\phi\in  C_0^\infty$ with mean zero and hence $L(1,...,1)=\beta$ as an element of $BMO$.  Again for any $\phi\in C_0^\infty$ with mean zero and $\supp(\phi)\subset B(0,N)$, we have for $i=1,...,m$
\begin{align}
\<L^{i*}(1,...,1),\phi\>&=\lim_{R\rightarrow\infty}\int_0^{R/4}\int_{|x|<N+t}Q_t^2\beta(x)P_t\phi(x)[P_t\eta_R(x)]^{m-1}Q_t\eta_R(x)dx\frac{dt}{t}\notag\\
&\hspace{1cm}+\lim_{R\rightarrow\infty}\int_{R/4}^\infty\int_{|x|<N+t}Q_t^2\beta(x)P_t\phi(x)[P_t\eta_R(x)]^{m-1}Q_t\eta_R(x)dx\frac{dt}{t}.\label{L*(1,1)}
\end{align}
Once more without loss of generality take $R>2N$.  When $|x|<N+t$ and $t\leq R/4$
$$\supp(\psi_t(x-\cdot))\subset B(x,t)\subset B(0,N+2t)\subset B(0,R)$$
and hence $Q_t\eta_R(x)=Q_t1(x)=0$.  With this it is apparent that the first limit in \eqref{L*(1,1)} is $0$.  Similar to \eqref{estimate1}-\eqref{estimate3}, for the terms of \eqref{L*(1,1)} we have $||P_t\eta_R||_{L^1}\less R^n$, $||Q_t\eta_R||_{L^\infty}\less1$, and $||P_t\phi||_{L^\infty}\less t^{-(n+1)}$.  So the second term of \eqref{L*(1,1)} tends to $0$ as $R\rightarrow\infty$ just like the second term in computing $L(1,...,1)$ from \eqref{secondterm}.  Then $L^{*1}(1,...,1)=0$, which concludes the proof of lemma \ref{L(1)}.
\end{proof}

\bibliographystyle{amsplain}

\begin{thebibliography}{10}

\bibitem{AHLMcIT}\label{AHLMcIT} P. Auscher, A. M$^{\text{c}}$Intosh, S. Hofmann, M. Lacey, and P. Tchamitchian, \emph{The solution of the Kato Square Root Problem for Second Order Elliptic Operators on $\mathbb R^n$},
Ann. of Math. \textbf{156} (2002), 633--654.

\bibitem{C}\label{C} L. Carleson, {\it An Interpolation Problem for Bounded Analytic Functions}, American Journal of Mathematics, \textbf{80}, 4, (1958), 921-930.

\bibitem{Chr}\label{Chr} M. Christ, {\it A T(b) theorem with remarks on analytic capacity and the Cauchy integral}, Colloq. Math. \textbf{60/61}, no. 2, 601-628, (1990)

\bibitem{CJ}\label{CJ} M. Christ, J. L. Journ\'e, {\it Polynomial growth estimates for multilinear singular integral operators}, Acta Math., \textbf{159} 1-2 (1987) 51-80.

\bibitem{CM}\label{CM} R. R. Coifman and  Y. Meyer,
\emph{A simple proof of a theorem by G. David and J.-L. Journ\'e on singular integral operators}, Probability Theory and Harmonic Analysis, pp. 61--65, J. Chao and W. Woyczy\'nski  (eds.), Marcel Dekker, New York, 1986.

\bibitem{DJ}\label{DJ} G. David and J. L. Journ\'e, {\it A boundedness criterion for generalized Calder\'on-Zygmund operators}, Ann. of Math., \textbf{120} (1984) 371-397.

\bibitem{DJS}\label{DJS} G. David, J. L. Journ\'e, and S. Semmes, {\it Operateurs de Calderon-Zygmund, fonctions para-accretives et Interpolation}, Rev. Mat. Iberoam., \textbf{1}  4 (1985) 1-56.

\bibitem{DRdF} \label{DRdF} J. Duoandikoetxea and J. L. Rubio de Francia, {\it Maximal and singular integral operators via {F}ourier}, Invent. Math., \textbf{84}, 3, 541-561 (1986)

%\bibitem{FS1}\label{FS1} C. Fefferman, E. Stein, {\it Some Maximal Inequalities}, Amer, J. Math., \textbf{93}, 1, 107-115 (1971).

\bibitem{FS2}\label{FS2} C. Fefferman, E. Stein, {\it $H^p$ spaces of several variables}, Acta Math., \textbf{129}, 137-193 (1972).

%\bibitem{G}\label{G} L. Grafakos, {\it Classical and Modern Fourier Analysis}, Springer.

\bibitem{FS}\label{FS} G. Folland and E. Stein, {\it Hardy spaces on homogeneous groups}, Mathematical Notes, 28. Princeton University Press, Princeton, N.J. (1982).

\bibitem{GLMY}\label{GLMY} L. Grafakos, L. Liu, D. Maldonado, D. Yang {\it Multilinear anlaysis on metric spaces}, (preprint).

\bibitem{GO}\label{GO} L. Grafakos, L. Oliveira, {\it Carleson measures associated with families of multilinear operators}, Submited.

\bibitem{GT1}\label{GT1} L. Grafakos, R.H. Torres, {\it On multilinear singular integrals of Calder\'on-Zygmund type}, Publ. Mat., (2002) 57-91.

\bibitem{GT2}\label{GT2} L. Grafakos and R. H. Torres, \emph{Multilinear Calder\'on--Zygmund theory}, Adv. in Math. \textbf{165} (2002), 124--164.

%\bibitem{Ha}\label{Ha} Y. Han, {\it Calder\'on-type Reproducing Formula and the Tb Theorem}, Rev. Mat. Iberoam., \textbf{10} 1 (1994) 51-91.

\bibitem{Hart1}\label{Hart1} J. Hart, {\it bilinear square functions and vector-valued Calder\'on-Zygmund operators}, J. Fourier Anal. Appl., to appear (2012).

\bibitem{Hart2}\label{Hart2} J. Hart, {\it a new proof of the bilinear T(1) thoerem}, submitted.

\bibitem {Ho}\label{HO} S. Hofmann, \emph{A local $T(b)$ theorem for square functions}, Proc. Sympos. Pure Math. \textbf{79} (2008), 175--185

\bibitem{HMc}\label{HMc} S. Hofmann and A. McIntosh, {\it The solution of the Kato problem in two dimensions}, Proceedings of the Conference on Harmonic Analysis and PDE held in El Escorial, Spain in July 2000, Publ. Mat. Vol. extra, 2002 pp. 143--160.

\bibitem{HLMc}\label{HLMc} S. Hofmann, M. Lacey and A. McIntosh, {\it The solution of the Kato problem for divergence form elliptic operators with Gaussian heat kernel bounds}, Annals of Math. 156 (2002), pp 623--631.

%\bibitem{J}\label{J} P. Jones, {\it Square Functions, Cauchy Integrals, Analytic Capacity, and Harmonic Measure}, Harmonic Analysis and Partial Differential Equations, Lecture Notes in Math., 1384, Springer-Verlag (1989).

\bibitem{KS}\label{KS} C. Kenig, E. Stein, {\it Multilinear estimates and fractional integration}, Math. Res. Lett. \textbf{6}, no. 3-4, 467, (1999).

\bibitem{M}\label{M} D. Maldonado, {\it Multilinear Singular Integrals and Quadratic Estimates}, Doctoral Dissertation, University of Kansas, (2005).

\bibitem{MN}\label{MN} D. Maldonado, V. Naibo, {\it On the boundedness of bilinear operators on products of Besov and Lebesgue spaces}, J. Math. Anal. Appl., 352, 591-603 (2009).

\bibitem{P}\label{P} J. Peetre, {\it On convolution operators leaving $L^{p,\lambda}$ invariant}, Ann. Mat. Pura Appl., \textbf{72}, 295-304 (1966).

\bibitem{Se}\label{Se} S. Semmes, {\it Square Function Estimates and the Tb Theorem}, Proc. of the AMS, \textbf{110}, 3, (1990).

\bibitem{Sp}\label{Sp} S. Spanne, {\it Sur l'interpolation entre les espaces $\mathcal L_k^{p,\Phi}$}, Ann. Scuola Norm. Sup. Pisa, \textbf{20}, 625-648, (1966).

%\bibitem{S1}\label{S1} E. Stein, {\it Harmonic Analysis}, Princeton University Press, Princeton, New Jersey, Chpts III and IV, (1993).

\bibitem{S2}\label{S2} E. Stein, {\it Singular integrals, harmonic functions, and differentiability properties of functions of several variable}, in Calder\'on, 316-335, (1967).

\bibitem{W}\label{W}  M. Wilson, {\it Convergence and stability of the Calder—n reproducing formula in H1 and BMO.} J. Fourier Anal. Appl. 17, no. 5, (2011), 801Ð820.

\end{thebibliography}

\end{document}